\newtheorem{theorem}{Theorem}[section]
\newtheorem{corollary}[theorem]{Corollary}
\newtheorem{lemma}[theorem]{Lemma}
\newtheorem{proposition}[theorem]{Proposition}
\newtheorem{remark}[theorem]{Remark}
\title{Bounds on distinct and repeated dot product trees}
\author{Aaron Autry, Slade Gunter, Christopher Housholder, and Steven Senger}
\date{\today}
\begin{document}

\maketitle
\abstract{We study questions inspired by Erd\H os' celebrated distance problems with dot products in lieu of distances, and for more than a single pair of points. In particular, we study point configurations present in large finite point sets in the plane that are described by weighted trees. We give new lower bounds on the number of distinct sets of dot products serving as weights for a given type of tree in any large finite point set. We also as demonstrate the existence of many repetitions of some special sets of dot products occurring in a given type of tree in different constructions, narrowing gap between the best known upper and lower bounds on these configurations.}
\section{Introduction}
\subsection{Background}
In 1946, in \cite{Erd46}, Paul Erd\H os posed a problem that continues to inspire mathematicians: In a large finite set of points in the plane, what is the upper bound of the number pairs can determine the same distance? This is often called the unit distance problem. A related problem also described in \cite{Erd46} is the so-called distinct distances problem, which asks for a lower bound on the number of distinct distances that must be determined by any large finite set of points in the plane.

For the unit distance problem, after some initial activity, general progress seems to have halted in 1984, at the upper bound of a constant times $n^\frac{4}{3},$ given by Szemer\' edi and Trotter in \cite{SST}. In the case of the distinct distances problem, there was much activity and incremental progress up until 2010, when Guth and Katz essentially resolved the problem in \cite{GK}. For a more detailed look, see \cite{GIS} and \cite{GK}, and the references contained therein.

Though Erd\H os originally asked about pairs of points determining a fixed distance, there are a number of well-studied variants involving $k$-tuples of points determining other values, such as direction, angle, or dot product. Moreover, there are many different settings besides finite point sets in $\mathbb R^2$, such fractal sets, higher dimensions, or even different types of vector spaces and modules over finite fields and rings. For some examples and discussions, see \cite{CEHIK, CHISU, HIKR, IS, Vinh} and the references contained therein. Also, we will assume that the dot products discussed below are all nonzero, unless otherwise specified, as zero dot products can exhibit quite distinct geometric structure. See \cite{GPRS, IS, KMS} for more on the unique geometric properties of zero dot products.

We highlight for the moment on the specific problem of the number of distinct dot products determined by a large finite point set in the plane. Given a set of $n$ points in the plane, a straightforward argument using the celebrated Szemer\' edi-Trotter theorem (from \cite{ST83}) guarantees the existence of at least a constant times $n^\frac{2}{3}$ distinct dot products. We prove this below as Corollary \ref{ezdp}. This was the best-known bound for many years until the recent developments by Hanson, Roche-Newton, and the fourth listed author in \cite{HRS}, where an exponent of $\frac{2}{3}+\frac{1}{2739}$ was achieved by relying on tools from additive combinatorics.
\subsection{Setting}
In this note, our primary focus will be trees. We now describe the setup in detail. For a given large finite point set $E\subseteq \mathbb R^d,$ with $d\geq 2,$ we consider $G_E$ the complete edge-weighted graph whose vertices are points from $E$ and whose edge weights are determined by the dot product between the associated vertices. Now, given a tree $T$, we estimate two things: how many different sequences of edge weights correspond to instances of $T$ as a subgraph in $G_E$, and how many times a specific sequence of edge weights occurs as the weights for a copy of $T$ in $G_E.$ The former is analogous to the distinct distances problem, while the latter is akin to the unit distance problem.

To this end, we introduce some notation. Given a tree $T$ with vertices $(v_1, \dots, v_{k+1})$ we denote its $k$ edges by $\{\{v_{i_1},v_{i_2}\},\{v_{i_3},v_{i_4}\},\dots,\{v_{i_{2k-1}},v_{i_{2k}}\}\}$, where the indices for each pair of vertices corresponding to an edge, $\{v_{i_{2j-1}},v_{i_{2j}}\},$ are indexed in lexicographical order. Now, given a sequence of real number weights $\vec w:=(\alpha_1,\ldots,\alpha_k)$, we can consider the weighted tree $T_{\vec w}$ where the edge connecting $v_{i_{2j-1}}$ to $v_{i_{2j}}$ has weight $\alpha_j$, for $j=1, \dots, k.$ In what follows, we will usually be concerned with edge weights equal to the dot product determined by the pair of points corresponding to the pair of vertices.

There is an abundance of literature on the case that our tree $T$ is a path of length $k,$ sometimes called a $k$-chain. See for example \cite{BIT}, \cite{FK}, and \cite{PSS} for distance chain bounds, and see \cite{BCLS} and \cite{KMS} for results related to dot product chains. More general trees have also been widely studied, but we direct special attention to \cite{OT}, by Ou and Taylor, and later, \cite{PST}, by Dung, Pham, and the fourth listed author, for motivating the results here.

Throughout this paper, we assume that $k$ is like a constant compared to the size of any subset $E$. To ease exposition, we use the asymptotic symbols $X\lesssim Y$ if $X=O(Y),$ and $X\approx Y$ when $X = \Theta(Y).$ Moreover, we write $X \gtrapprox Y$ when for every $\epsilon >0,$ there exists a constant $C_\epsilon > 0$ such that $X \gtrsim C_\epsilon q^\epsilon Y.$

\subsection{Main results}

Our main result follows the ideas in \cite{OT}. It gives lower bounds for the number of distinct sets of dot products much arise as edge weights for trees in a large finite point set in the plane. We note that there is a hypothesis about the number of points along a line through the origin, as there is a technical obstruction from certain special cases of point sets which we discuss in further detail below.

\begin{theorem}\label{main1}
Given a large finite set of points $E\subseteq \mathbb R^2,$ and a tree $T$ on $k$ edges, the number of distinct $k$-tuples of dot products, $\vec w$, for which the tree $T_{\vec w}$ is realized in $E$ is at least $\gtrsim n^\frac{2k}{3}.$
\end{theorem}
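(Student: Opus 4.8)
The plan is to bootstrap from the single-edge case (Corollary \ref{ezdp}) by building the tree one leaf at a time; for this to not lose the factor $n^{2/3}$ at each step I first need a \emph{robust} form of Corollary \ref{ezdp}: a large subset of $E$ inside which every point, not merely the set on average, determines many dot products. Concretely, I would show that under the hypothesis on the number of points of $E$ lying on a line through the origin (say no more than roughly $n^{2/9}$, and after discarding the origin itself if it lies in $E$) there is a nonempty $R^{\ast}\subseteq E$ with $|\{p\cdot x : x\in R^{\ast}\}|\ge\delta$ for every $p\in R^{\ast}$, where $\delta\gtrsim n^{2/3}$; note $|R^{\ast}|\ge\delta$ automatically. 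This is the Szemer\'edi--Trotter argument behind Corollary \ref{ezdp} iterated: repeatedly delete from the current point set any point having fewer than $\delta:=c\,n^{2/3}$ distinct dot products with that set, and let $R^{\ast}$ be what remains. If the process exhausted $E$, then each of the first $\lceil n/2\rceil$ deleted points has fewer than $\delta$ dot products with the set $F$ of the $\lfloor n/2\rfloor$ points that survive all those deletions, so the incidences between $F$ and the $\lesssim n\delta$ lines $\{y:p\cdot y=s\}$ (ranging over these deleted $p$ and their $\le\delta$ dot-product values) number $\gtrsim n^{2}$ — the multiplicity of a single point--line incidence being at most the number of points of $E$ on the line through the origin orthogonal to it, which the hypothesis controls — contradicting Szemer\'edi--Trotter. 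Hence the process stabilises at a nonempty self-rich $R^{\ast}$.

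Given $R^{\ast}$, I would grow $T$ inside it. Fix a construction order $v_{1},\dots,v_{k+1}$ of the vertices of $T$ so that each $v_{i}$ with $i\ge2$ has a unique neighbour $\pi(v_{i})$ among $v_{1},\dots,v_{i-1}$; since the paper's lexicographic edge-labelling only permutes the coordinates of $\vec w$, I may assume the $i$th coordinate of $\vec w$ records the edge $\{v_{i},\pi(v_{i})\}$. Let $A_{i}$ be the set of $(i-1)$-tuples realised by some injection $\phi\colon\{v_{1},\dots,v_{i}\}\to R^{\ast}$ with $\phi(v_{j})\cdot\phi(\pi(v_{j}))$ equal to the $(j-1)$st coordinate for all $2\le j\le i$. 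Given $\vec a\in A_{i}$ realised by $\phi$, the point $\phi(\pi(v_{i+1}))$ lies in $R^{\ast}$, hence has at least $\delta$ distinct dot products with $R^{\ast}$, and at most $k$ of the corresponding fibres can fail to contain a point of $R^{\ast}$ not already used by $\phi$; so $v_{i+1}$ can be placed to realise at least $\delta-k$ distinct new values of the $i$th coordinate. As distinct $\vec a$ yield tuples with distinct $(i-1)$-prefixes, these extensions are disjoint, whence $|A_{i+1}|\ge|A_{i}|(\delta-k)$. Since $|A_{1}|=1$, induction gives $|A_{k+1}|\ge(\delta-k)^{k}\gtrsim n^{2k/3}$, and every tuple in $A_{k+1}$ is realised in $E$ because $R^{\ast}\subseteq E$.

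The main obstacle is the first step, specifically its calibration against the line-through-the-origin hypothesis: one has to determine precisely how many points collinear with the origin can be tolerated before the incidence bound falls below $n^{2/3}$ per edge, and deal separately with the genuinely degenerate configurations that violate that bound (for example, a line through the origin carrying enough points of $E$ to furnish many distinct products outright). Once the robust lemma is in hand, the tree-growing step is essentially bookkeeping — the one thing to watch being that the tree's edge-labelling convention does not interfere with the multiplicativity of the count, which is why it is convenient to re-index via the construction order at the outset.
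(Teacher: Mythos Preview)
Your overall strategy---establish that a large subset of $E$ is ``self-rich'' for dot products, then grow the tree one leaf at a time inside it, handling radial-line degeneracies separately---is correct and parallels the paper's at a high level. The implementations differ in two places, and one of them contains a calibration error that leaves a genuine gap.

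For the robust lemma, the paper proves a two-set pinned estimate (Theorem~\ref{pinned}) via Sz\'ekely's multigraph crossing-number lemma (Lemma~\ref{CNL}), which absorbs the multiplicity coming from pins sharing a radial direction and yields the threshold $n^{2/3}$ directly. Your iterated-deletion argument is a valid alternative, but the accounting is loose: you divide the weighted incidence count $\approx n^{2}$ by the radial multiplicity $M$ while leaving the line count at $\lesssim n\delta$, and this only forces $M\gtrsim n^{2/9}$. If instead you first group the $\approx n/2$ deleted pins by radial direction---say $D$ directions, contributing exactly $D\,|F|$ \emph{distinct} point--line incidences against at most $D\delta$ \emph{distinct} lines---then Szemer\'edi--Trotter gives $D\lesssim\delta^{2}/n\approx n^{1/3}$, and combining with $D\ge(n/2)/M$ yields $M\gtrsim n^{2/3}$. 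This matters: with your stated threshold $n^{2/9}$, configurations whose heaviest radial line carries between $n^{2/9}$ and $n^{2/3}$ points fall through the cracks---too heavy for your robust lemma as written, too light for the product-set argument you allude to for the degenerate case. With the corrected threshold $n^{2/3}$ the degenerate case coincides with the paper's and is handled identically.

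For the tree-building, your leaf-by-leaf growth inside the single self-rich set $R^{\ast}$ is cleaner than the paper's route. The paper proves a stronger two-set pinned statement (Theorem~\ref{tech1}: the designated vertex $v$ lands in $E$, all other vertices in $F$) and therefore must induct by splitting $T$ at a non-leaf $v$ into two subtrees sharing only $v$. Your self-rich set collapses the $E$/$F$ distinction, so you avoid the split and reach Theorem~\ref{main1} more directly---at the cost of not obtaining the bipartite pinned tree result along the way.
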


The key to proving this result is a pinned dot product result between two sets, stated precisely below as Theorem \ref{pinned}. While this result does not immediately extend to to higher dimensions, we can still obtain a nontrivial pinned estimate.

\begin{theorem}\label{highDim1}
Given a large finite set of $n$ points $E\subseteq \mathbb R^d,$ we have that there exists a point $x \in E$ for which
\[\left|\{x\cdot y:y\in E\}\right| \gtrsim n^\frac{2}{2d-1}.\]
\end{theorem}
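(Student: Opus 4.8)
The plan is to induct on the dimension $d$, taking the planar case $d=2$ — where the claim is the bound $\gtrsim n^{2/3}$ — as the base case; that case is exactly the pinned planar estimate of Theorem~\ref{pinned} (applied with both point sets equal to $E$). Throughout write $\Delta_x:=\{x\cdot y:y\in E\}$ and $D:=\max_{x\in E}|\Delta_x|$, and assume $0\notin E$ (removing it changes nothing asymptotically). Fix $d\ge 3$ and suppose the result known in dimension $d-1$.

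The first step is a Cauchy--Schwarz lower bound for an iterated incidence count. For $j\ge 0$ set
\[
Q^{(j)}:=\bigl|\{(x_1,\dots,x_j,y,y')\in E^{j}\times E^{2}:(y-y')\cdot x_i=0\text{ for all }i\}\bigr|.
\]
Writing $m_{x,t}:=|\{y\in E:x\cdot y=t\}|$, one has $Q^{(1)}=\sum_{x\in E}\sum_{t\in\Delta_x}m_{x,t}^{2}\ge\sum_{x\in E}n^{2}/|\Delta_x|\ge n^{3}/D$. Next observe that $Q^{(j)}=\sum_{(y,y')\in E^{2}}|E\cap(y-y')^{\perp}|^{\,j}$ (with the convention $0^{\perp}=\mathbb{R}^{d}$), so Hölder's inequality in the variable $(y,y')$ bootstraps this to $Q^{(j)}\gtrsim n^{\,j+2}/D^{\,j}$. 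I will take $j=d-2$, giving $Q^{(d-2)}\gtrsim n^{d}/D^{\,d-2}$.

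The second step bounds $Q^{(d-2)}$ from above in terms of $N$, the largest number of points of $E$ lying on a single hyperplane through the origin. Separating the diagonal, $Q^{(d-2)}=n^{d-1}+\sum_{y\ne y'}|E\cap(y-y')^{\perp}|^{\,d-2}$; since each $(y-y')^{\perp}$ is a hyperplane through the origin, each such factor is at most $N$, and pulling out $d-3$ copies of $N$ and using $\sum_{y\ne y'}|E\cap(y-y')^{\perp}|\le N n^{2}$ gives $Q^{(d-2)}\lesssim n^{d-1}+N^{d-2}n^{2}$. Comparing with the lower bound yields $D\gtrsim\min\!\bigl(n^{1/(d-2)},\,n/N\bigr)$; since $\tfrac{1}{d-2}>\tfrac{2}{2d-1}$, if the minimum is the first term we are already done, so we may assume $D\gtrsim n/N$. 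Separately, if $h_{0}$ is a hyperplane through the origin with $|E\cap h_{0}|=N$, then $h_{0}$ is a linear subspace on which the ambient dot product restricts to its own inner product, so the inductive hypothesis applied in $h_{0}\cong\mathbb{R}^{d-1}$ produces a point $x\in E\cap h_{0}\subseteq E$ with $|\Delta_x|\ge|\{x\cdot y:y\in E\cap h_{0}\}|\gtrsim N^{2/(2d-3)}$, hence $D\gtrsim N^{2/(2d-3)}$. Combining, $D\gtrsim\max\!\bigl(n/N,\,N^{2/(2d-3)}\bigr)$, which is smallest when $N\approx n^{(2d-3)/(2d-1)}$, at which point it is $\gtrsim n^{2/(2d-1)}$; this completes the induction. (Equivalently, the attainable exponent $f(d)$ satisfies $f(d)\ge f(d-1)/(1+f(d-1))$ with $f(2)=2/3$, which solves to $f(d)=2/(2d-1)$.)

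The step I expect to be most delicate is the Hölder bootstrap $Q^{(1)}\gtrsim n^{3}/D\Rightarrow Q^{(d-2)}\gtrsim n^{d}/D^{\,d-2}$: one must set up the inequality so that the diagonal terms $y=y'$, which are the source of the harmless $n^{d-1}$ on the upper-bound side, do not also overwhelm the lower bound, and the exponents must be tracked carefully through each application. The remaining points should be routine — the pinning point is legitimately in $E$ because it lies in $E\cap h_{0}$, and configurations in which $E$ is degenerate (contained in a proper subspace, or containing the origin) only enlarge the relevant hyperplane $h_{0}$ and so only help.
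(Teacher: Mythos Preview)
Your argument is correct and takes a genuinely different route from the paper's. The paper never sets up the energy quantities $Q^{(j)}$; instead it slices iteratively. Starting from $E\subset\mathbb R^d$ it lets $t_1=\max_{x\in E}|\Pi_x(E)|$, picks a richest $\alpha$-hyperplane $P_1$ (an \emph{affine} level set of some $x_1\cdot(\cdot)$, carrying $\gtrsim n/t_1$ points by pigeonhole), and repeats inside $P_1$ to produce $t_2,P_2,\dots$ until after $d-2$ steps it reaches a $2$-plane with $\gtrsim n/(t_1\cdots t_{d-2})$ points, applies Theorem~\ref{pinned} there, and balances $\max\bigl\{t_1,\dots,t_{d-2},(n/(t_1\cdots t_{d-2}))^{2/3}\bigr\}$ to obtain $n^{2/(2d-1)}$.

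So both proofs reduce to the planar estimate, but yours descends one dimension at a time through a \emph{linear} hyperplane via induction, while the paper drops straight to a $2$-plane through successive \emph{affine} $\alpha$-slices. Your Cauchy--Schwarz/Jensen lower bound on $Q^{(d-2)}$ is a real extra ingredient with no analogue in the paper, and it delivers the clean auxiliary inequality $D\gtrsim n/N$ relating the pinned count to the maximal linear-hyperplane richness --- a statement of independent interest that the paper's pigeonholing does not isolate. The paper's approach, in exchange, is more elementary (no energy machinery) and never needs to find a rich hyperplane through the origin. One cosmetic remark: the step ``pulling out $d-3$ copies of $N$ and using $\sum_{y\ne y'}|E\cap(y-y')^\perp|\le Nn^2$'' is just the trivial bound $a_{y,y'}\le N$ applied termwise, so you may as well write $\sum_{y\ne y'}a_{y,y'}^{d-2}\le N^{d-2}n^2$ directly. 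The diagonal concern you flag is already handled by your case split: if the $n^{d-1}$ term dominates the upper bound you land in the $D\gtrsim n^{1/(d-2)}$ branch and are done.
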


The other result we have gives lower bounds on how often a particular set of dot product edge weights can occur in some specific point set constructions. It is based on two different constructions which have different strengths and weaknesses. 

\begin{theorem}\label{main2}
Given a tree $T$ on $k$ edges, and a large finite natural number $n,$ there exists a $k$-tuple of dot products $\vec w$ and set of $n$ points in $\mathbb R^d$ that exhibits
\[\gtrsim \max\left\{n^{1+\frac{k(d-1)}{(d+1)}},n^{\left\lceil\frac{k+1}{2}\right\rceil}\right\}\]
copies of $T_{\vec w}.$
\end{theorem}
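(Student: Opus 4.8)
The plan is to establish the two lower bounds in the maximum separately, via two explicit constructions, and then observe that taking the better of the two for each regime of $d$ and $k$ gives the claimed bound.

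\textbf{Construction 1: the lattice-type bound $n^{1+\frac{k(d-1)}{d+1}}$.} First I would produce a single point set $E$ in which one fixed dot-product weight vector $\vec w$ is realized by many copies of $T$. The natural candidate is a scaled integer lattice, $E = \{1,2,\dots,m\}^d$ with $n = m^d$, possibly intersected with a ball, which is the standard source of many repeated distances/dot products. The key sub-step is the single-edge count: I want to show that for a suitable target value $\alpha$, the number of ordered pairs $(x,y) \in E \times E$ with $x \cdot y = \alpha$ is $\gtrsim n^{1 + \frac{d-1}{d+1}} = n^{\frac{2d}{d+1}}$; equivalently, the number of lattice points on a generic hyperplane $\{x : x\cdot y = \alpha\}$ summed appropriately is large. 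This is where I would lean on the classical fact that the equation $x\cdot y = \alpha$ over a box of side $m$ in $\mathbb{R}^d$ — for $d \geq 2$ — has on the order of $m^{2d-2}$ solutions for popular $\alpha$ (after accounting for divisor-type savings), matching $n^{\frac{2d-2}{d}}$; but to get the stated exponent I expect the right move is to fix $y$ in a sublattice and count, or to use the bound that some hyperplane section contains $\gtrsim m^{d-1}$ points of the box and that many such sections share the same weight. Once the single-edge repetition count $R_1 \gtrsim n^{\frac{2d}{d+1}}$ is in hand, I would build up the tree greedily: root $T$, and add vertices one at a time in a breadth-first order, so that each new vertex is joined to an already-placed vertex by a prescribed edge weight. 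The point is that with the lattice's symmetry, each new edge can be realized in $\gtrsim n^{\frac{d-1}{d+1}}$ ways \emph{independently} of the earlier choices (translating/reflecting the ``pencil'' of solutions through the already-placed vertex), so the total count multiplies to $n \cdot (n^{\frac{d-1}{d+1}})^k = n^{1 + \frac{k(d-1)}{d+1}}$, where the leading $n$ counts the placement of the root.

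\textbf{Construction 2: the degenerate bound $n^{\lceil (k+1)/2 \rceil}$.} Here I would use a completely different, ``flat'' construction exploiting that \emph{zero-free} dot products can still be forced to coincide by collinearity. Concretely: partition the $k+1$ vertices of $T$ into an independent-ish structure; since $T$ is a tree on $k$ edges it is bipartite, so 2-color the vertices into classes $A$ and $B$ with $|A| + |B| = k+1$ and $\min(|A|,|B|) \le \lceil (k+1)/2\rceil$... the cleaner route is: place all vertices of one color class at points of the form $(t, 1)$ and all of the other color class at points of the form $(1/s, \dots)$ or, more simply, put one part on a line $\ell_1$ through a controlled direction and the other part on a line $\ell_2$ chosen so that $u \cdot v$ depends on fewer parameters than $|A|\cdot|B|$. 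The aim is a configuration where a vertex's position is free to range over $\approx n$ choices while only $\lceil (k+1)/2 \rceil$ of these free coordinates actually influence the vector of edge dot products — for instance, by placing the ``large'' color class along a single line through the origin (so those points are scalar multiples of a fixed direction $e$, and $x_i \cdot y = \lambda_i (e \cdot y)$ is controlled by one value $e\cdot y$ per $B$-vertex), one reduces the number of genuinely free parameters to roughly the size of the smaller class plus one per large-class vertex — I'd optimize the coloring/placement to land exactly on $\lceil (k+1)/2\rceil$ free parameters each ranging over $\approx n^{1}$ values, giving $\gtrsim n^{\lceil (k+1)/2 \rceil}$ copies of a single $T_{\vec w}$.

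\textbf{Main obstacles and conclusion.} I expect the genuinely hard part to be Construction 1: making the per-edge multiplication rigorous, i.e.\ checking that the number of ways to extend a partial embedding by one new leaf edge is bounded below by $\gtrsim n^{\frac{d-1}{d+1}}$ \emph{uniformly} over the partial embedding and the fixed target weight, without the pencils of solutions for different edges destructively interfering. The clean way to handle this is to choose $\vec w$ itself adaptively as the most popular weight vector — run the greedy count with free weights first, bound the total number of embeddings of $T$ with \emph{any} weight vector from below by $n \cdot (\#\{\text{edges realizations}\})^{k}$-type reasoning, bound the number of distinct weight vectors from above by a polynomial in $n$ of smaller degree, and pigeonhole to extract one $\vec w$ with the stated multiplicity. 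For Construction 2 the obstacle is purely bookkeeping: verifying the chosen lines avoid zero dot products and that the parameter count is exactly $\lceil (k+1)/2 \rceil$ and not off by one; this I would settle by an explicit small-case check ($k=1,2,3$) and then induct on the tree structure. Finally, since the two bounds are incomparable — the first wins for $d$ large relative to $k$, the second for $d$ close to $2$ — the theorem follows by reporting the maximum.
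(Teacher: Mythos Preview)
Your two-construction outline matches the paper's, but both constructions have real gaps in the form you propose.

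\textbf{Construction 2.} Placing the large colour class on a line \emph{through the origin} does not work: if $x_i=\lambda_i e$ and $y$ is a fixed small-class vertex, then $x_i\cdot y=\lambda_i(e\cdot y)$ varies with $\lambda_i$, so you cannot move $x_i$ while keeping $\vec w$ fixed (unless $e\cdot y=0$, which is excluded). The paper's fix is simple but different: for each vertex $u$ in the larger class $U$ place $\lfloor n/|U|\rfloor$ points on a vertical line $x=c_u$ (a distinct $c_u$ per vertex), and for each vertex $v$ in $V$ place a \emph{single} point $(d_v,0)$ on the $x$-axis. Then any point $(c_u,t)$ dotted with $(d_v,0)$ gives $c_u d_v$, independent of $t$, so every $U$-vertex is genuinely free and the count is $(n/|U|)^{|U|}\gtrsim n^{\lceil (k+1)/2\rceil}$. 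The free parameters sit in the \emph{large} class, not the small one.

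\textbf{Construction 1.} Neither of your proposed mechanisms gives the exponent $1+\tfrac{k(d-1)}{d+1}$. Dot products are not translation-invariant, so ``translating the pencil of solutions through the already-placed vertex'' does not preserve the target weight; and the pigeonhole route (total embeddings divided by number of weight vectors) falls short because the number of distinct dot products in a side-$m$ box is $\Theta(m^2)$, yielding only $n^{k+1-2k/d}$, which is weaker than the claim for every $d\ge 2$. The paper instead uses an explicit two-set construction (from \cite{IS20}): with $q\approx n^{1/(d+1)}$ one builds a lattice-like set $E=A^{d-1}\times B$ of size $\approx n$ and a companion set $F$ of points of the form $(-c_1/b,\dots,-c_{d-1}/b,1/b)$ such that each $f\in F$ has dot product \emph{exactly $1$} with the $\approx q^{d-1}$ points of $E$ lying on an associated hyperplane, and dually each $e\in E$ has dot product $1$ with $\approx q^{d-1}$ points of $F$. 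One then $2$-colours $T$, assigns one class to $E$ and the other to $F$, and extends greedily: the root has $\approx n$ choices and every subsequent vertex has $\approx q^{d-1}\approx n^{(d-1)/(d+1)}$ choices, all with edge weight $1$. The essential idea you are missing is that the uniformity you were worried about is obtained not by symmetry of a single lattice but by building a \emph{bipartite} pair $(E,F)$ in which the dot-product-$1$ relation is biregular by design.
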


We can see that depending on $k$ and the ambient dimension, one or the other bound may dominate. To compare this with known results, we can consider a special case. Suppose $T$ is a perfect binary tree of height $h$. It is known that this tree has $2^{h+1}-1$ vertices, meaning it has $2^{h+1}-2$ edges. So if we set $k=2^{h+1}-2,$ Theorem \ref{main2} guarantees the existence of a set of $n$ points and a $k$-tuple of dot products $\vec w$ in the plane that has $\gtrsim n^{2^h}$ copies of $T_{\vec w}$. This agrees with the power of $n$ of the upper bounds on the same, given in \cite{GPRS}, as the following result will show, showing that both bounds are tight. We note here that in \cite{GPRS}, Theorems 1.12 and 1.13 are correct for $c$-ary trees when $c=2,$ but have some superficial typos when $c\neq 2.$

\begin{theorem}\label{old}[Theorem 1.13 from \cite{GPRS}]
Given a large finite set of $n$ points $E\subseteq \mathbb R^2,$ a complete balanced binary tree $T$ of height $h$, and a $\left(2^{h+1}-2\right)$-tuple of dot products $\vec w,$ there cannot be more than $\lesssim n^{2^h}$ copies of $T_{\vec w}$ determined by points in $E$.
\end{theorem}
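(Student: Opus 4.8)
The plan is to induct on the height $h$, powered by the elementary fact that in $\mathbb{R}^2$ two linearly independent linear conditions cut out a point: if $p_1,p_2\in\mathbb{R}^2$ are linearly independent and $a_1,a_2\in\mathbb{R}$, then there is a unique $y$ with $p_1\cdot y=a_1$ and $p_2\cdot y=a_2$, while if $p_2=\lambda p_1$ the two conditions are compatible only when $a_2=\lambda a_1$, in which case the solution set is a line. Let $N(h)$ denote the maximum number of weighted copies of the complete balanced binary tree of height $h$, taken over all $n$-point sets $E\subseteq\mathbb{R}^2$ and all tuples of (nonzero) edge weights; the goal is $N(h)\lesssim n^{2^h}$, and the base case $h=0$ is immediate from $N(0)=n$. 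For the inductive step I would split $T_h$ into its root $r$, its two children $c_1,c_2$ joined to $r$ by edges of weights $\gamma_1,\gamma_2$, and the two height-$(h-1)$ subtrees $S_1,S_2$ rooted at $c_1,c_2$. Every copy $\phi$ of $T_h$ restricts to a copy $\psi_i$ of $S_i\cong T_{h-1}$ on each side, together with the point $x=\phi(r)$, subject to $\psi_1(c_1)\cdot x=\gamma_1$ and $\psi_2(c_2)\cdot x=\gamma_2$.

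First I would dispatch the generic situation. If $\psi_1(c_1)$ and $\psi_2(c_2)$ are linearly independent, then $x$ is the unique point pinned by the pair $(\psi_1,\psi_2)$, so $\phi$ is recovered from $(\psi_1,\psi_2)$; hence the number of such copies is at most the number of pairs of copies of $T_{h-1}$, which is $\le N(h-1)^2\lesssim\left(n^{2^{h-1}}\right)^2=n^{2^h}$ by the inductive hypothesis. (We overcount freely here, dropping the requirement that $\psi_1,\psi_2$ have disjoint images and that $x$ actually lie in $E$; for an upper bound this is harmless.) The same count is what one gets from the intuitive picture of placing the $2^h$ leaves of $T_h$ arbitrarily and then reading off every internal vertex as the unique point pinned by its two already-placed children — valid precisely as long as no two siblings land on a common line through the origin.

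The remaining, degenerate copies — those with $\psi_1(c_1)\parallel\psi_2(c_2)$, and, unwinding the recursion, those in which some internal vertex has both children mapped to points collinear with the origin — are the main obstacle. The structural leverage is rigidity: $\psi_2(c_2)=\lambda\psi_1(c_1)$ forces $\lambda=\gamma_2/\gamma_1$, so one subtree is completely slaved to the other; in particular, when $\gamma_1=\gamma_2$ this would force $\phi(c_1)=\phi(c_2)$, which an (injective) copy cannot do, so that subcase is vacuous. When $\gamma_1\ne\gamma_2$ the degenerate copies live over the set $\{p\in E:\,(\gamma_2/\gamma_1)p\in E\}$, and the crux is a separate argument showing that a point set with enough of this multiplicative structure — equivalently, with too much dilation or collinearity-with-the-origin structure — cannot simultaneously carry many copies of the subtrees, so that the degenerate contribution also stays $\lesssim n^{2^h}$; this is the same flavour of special-configuration bookkeeping responsible for the hypothesis about lines through the origin in Theorem~\ref{main1}. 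Adding the generic and degenerate bounds closes the induction and gives $N(h)\lesssim n^{2^h}$.
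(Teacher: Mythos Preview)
This theorem is quoted from \cite{GPRS} and is not proved in the present paper, so there is no in-paper argument to compare against. On its own merits, your inductive skeleton is the natural one, and the generic case---where the two children $\psi_1(c_1),\psi_2(c_2)$ of every internal vertex are linearly independent---is handled correctly: the root is then uniquely determined by the pair of subtrees, giving at most $N(h-1)^2\lesssim n^{2^h}$ such copies.

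The gap is the degenerate case, and it is a real one rather than a bookkeeping detail. The assertion that ``one subtree is completely slaved to the other'' overstates what the collinearity $\psi_2(c_2)=\lambda\psi_1(c_1)$ with $\lambda=\gamma_2/\gamma_1$ actually yields: only the \emph{root} $\psi_2(c_2)$ of the second subtree is pinned by $\psi_1(c_1)$, not its remaining $2^{h}-2$ vertices, so $\psi_2$ still carries essentially all of its degrees of freedom. Meanwhile the root $x$ of the whole tree now ranges over an entire line in $E$ (up to $n$ choices rather than one). So the degenerate contribution is bounded by something like
\[
\sum_{p\in E:\,\lambda p\in E} S_1(p)\,S_2(\lambda p)\,\bigl|\ell_{\gamma_1}(p)\cap E\bigr|,
\]
where $S_i(p)$ counts height-$(h-1)$ subtrees rooted at the fixed point $p$; you have supplied no bound for this sum. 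The appeal to ``multiplicative structure'' and the analogy with the radial-line hypothesis of Theorem~\ref{main1} is suggestive but is not an argument: for $h\ge 2$ one genuinely needs either a pinned subtree count or an incidence estimate to absorb the extra factor coming from the free root, and neither appears here. For $h=1$ the degenerate term is easily dominated by the $\lesssim n^{4/3}$ single-dot-product incidence bound, but the induction does not close for larger $h$ without that missing ingredient.
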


The structure of the rest of the paper is as follows. We first acquaint the reader with our primary tools, then prove the main technical theorem, which is a pinned estimate. We then show how to use this to prove our first main result, which is a lower bound on distinct $k$-tuples of dot products determined by trees isomorphic to a given tree. Finally, we conclude with two constructions that together establish our second main result, a lower bound on how often a particular $k$-tuple of dot products determined by trees isomorphic to a given tree can occur.

\section{Proofs}
\subsection{Tools}

Given a point $p=(p_1,p_2)\in \mathbb R^2\setminus\{(0,0)\},$ the set of points $q\in \mathbb R^2$ that determine dot product $\alpha$ with $p$ form a line whose slope is perpendicular to the line through both $p$ and the origin. To see this, we simply recall the definition of dot product and rearrange the equation $p_1q_1+p_2q_2 = \alpha.$ We sometimes call this the $\alpha$-line of $p$, and denote it $\ell_\alpha(p).$ A direct consequence of this definition is that $p\neq q$ implies $\ell_\alpha(p)\neq \ell_\alpha(q).$ We now recall the Szemer\' edi-Trotter theorem from \cite{ST83}. Given a point $p$ and a line $\ell$, we call the pair $(p, \ell)$ an incidence if the point $p$ is on the line $\ell.$ 

\begin{theorem}\label{ST}
Given a set of $n$ points and $m$ lines in the plane, the number of incidences between the points and lines is at most $$\lesssim (mn)^\frac{2}{3}+n+m.$$
\end{theorem}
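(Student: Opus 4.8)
The cleanest route is Székely's crossing-number argument, which I would organize as follows. First I would peel off the degenerate part: a line through at most one of the $n$ points contributes at most one incidence, so all such lines together account for at most $m$ incidences, and it suffices to bound the number $I'$ of incidences lying on the ``rich'' lines, those carrying at least two of the points. Write $m'\le m$ for the number of rich lines and, for a rich line $\ell$, let $k_\ell\ge 2$ be the number of the points on $\ell$, so that $I'=\sum_\ell k_\ell$ and $I\le I'+m$.

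Next I would build a graph $G$ whose vertices are the $n$ points: for each rich line $\ell$, list the points lying on $\ell$ in their order along $\ell$ and join consecutive ones by an edge, contributing $k_\ell-1$ edges. Since two distinct points determine a unique line, $G$ is a simple graph, and it has $e=\sum_\ell(k_\ell-1)=I'-m'$ edges; in particular $I\le e+2m$. Now draw $G$ in the plane by placing each vertex at its actual location and drawing each edge as the corresponding segment. Two edges along a common line are either disjoint or share only an endpoint, hence never cross; two edges along distinct lines can cross only at the point where those two lines meet, and for each pair of lines at most one edge of each passes through that point in its relative interior. So this particular drawing has at most $\binom{m}{2}<m^2/2$ crossings, whence $\mathrm{cr}(G)<m^2/2$.

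I would then invoke the Crossing Lemma: every simple graph on $n$ vertices with $e\ge 4n$ edges satisfies $\mathrm{cr}(G)\gtrsim e^3/n^2$. If $e<4n$, then $I\le e+2m<4n+2m\lesssim n+m$ and we are done. Otherwise, combining the Crossing Lemma with $\mathrm{cr}(G)<m^2/2$ gives $e^3/n^2\lesssim m^2$, hence $e\lesssim (mn)^{2/3}$, so $I\le e+2m\lesssim (mn)^{2/3}+m$. Putting the two cases together yields $I\lesssim (mn)^{2/3}+n+m$, as claimed.

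The substantive input --- the step I expect to be the real obstacle in a self-contained treatment --- is the Crossing Lemma itself. Its standard proof starts from the Euler-formula bound $\mathrm{cr}(H)\ge E-3N+6$, valid for any simple graph $H$ with $N$ vertices and $E$ edges, and amplifies this linear estimate to the cubic bound $\mathrm{cr}(H)\gtrsim E^3/N^2$ for $E\ge 4N$ by deleting each vertex independently with probability $1-p$ for a well-chosen $p\approx N/E$ and taking expectations. Everything else is bookkeeping: confirming that $G$ has no multiple edges, that collinear edges contribute no crossings, and that the two reductions cost only additive $O(n)+O(m)$ terms. One could instead argue via a cell (cutting) decomposition or via polynomial partitioning, but the crossing-number proof is by far the most economical for the bound stated here.
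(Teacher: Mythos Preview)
Your argument is the standard Sz\'ekely crossing-number proof and is correct as written: the reduction to rich lines, the simplicity of the incidence graph, the bound $\mathrm{cr}(G)<\binom{m}{2}$ from the drawing along the lines, and the dichotomy $e<4n$ versus $e\ge 4n$ are all handled properly.

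That said, the paper does not prove this theorem at all. Theorem~\ref{ST} is quoted as a known result from \cite{ST83} and used as a black box (for instance in the proof of Corollary~\ref{ezdp}); there is no proof of it in the paper to compare against. It is worth noting, though, that your approach is entirely consonant with the paper's toolkit: the paper later states the multigraph crossing lemma (Lemma~\ref{CNL}, from Sz\'ekely \cite{Szek}) and runs a very similar crossing-number argument to prove the pinned estimate Theorem~\ref{pinned}. So while there is nothing in the paper to match your proof against, what you wrote is precisely the argument the paper's machinery would produce if one specialized Lemma~\ref{CNL} to the simple-graph case and ran it on the point-line incidence graph.
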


We now give the following well-known corollary alluded to above to illustrate some of the basic ideas to come.

\begin{corollary}\label{ezdp}
Given a large finite set of $n$ points $E\subseteq \mathbb R^2,$ the point pairs determine at least $n^\frac{2}{3}$ distinct dot products.
\end{corollary}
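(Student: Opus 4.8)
\noindent\emph{Proof proposal.} The plan is a double counting against the Szemer\'edi--Trotter theorem (Theorem~\ref{ST}), organized by the value of the dot product so that the injectivity property highlighted above does all of the bookkeeping. Write $\Pi := \{x\cdot y : x,y\in E\}$ for the set of distinct (nonzero) dot products determined by $E$ and set $D:=|\Pi|$; the goal is $D\gtrsim n^{\frac23}$. For each fixed $\alpha\in\Pi$, introduce the family of lines $\mathcal L_\alpha := \{\ell_\alpha(p):p\in E\}$. Because $\alpha\neq 0$, the map $p\mapsto \ell_\alpha(p)$ is injective --- this is exactly the consequence of the definition of $\ell_\alpha(p)$ recorded above --- so $|\mathcal L_\alpha| = n$.

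Next I would relate rich dot products to incidences. Let $r(\alpha)$ be the number of ordered pairs $(p,q)\in E\times E$ with $p\cdot q = \alpha$. For each such pair we have $q\in\ell_\alpha(p)$, giving an incidence $(q,\ell_\alpha(p))$ between the $n$ points of $E$ and the $n$ lines of $\mathcal L_\alpha$, and by injectivity of $p\mapsto\ell_\alpha(p)$ distinct pairs $(p,q)$ yield distinct incidences. Hence the number of incidences between $E$ and $\mathcal L_\alpha$ is at least $r(\alpha)$, and Theorem~\ref{ST} applied with $n$ points and $m=n$ lines gives
\[
r(\alpha)\;\lesssim\;(n\cdot n)^{\frac23}+n+n\;\lesssim\;n^{\frac43}.
\]

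To conclude, sum over $\alpha$: every one of the $\approx n^2$ ordered pairs in $E\times E$ with nonzero dot product is counted by exactly one $r(\alpha)$, so $n^2\lesssim\sum_{\alpha\in\Pi}r(\alpha)\le D\cdot\max_\alpha r(\alpha)\lesssim D\,n^{\frac43}$, whence $D\gtrsim n^{\frac23}$. (Pairs with $p\cdot q=0$ are excluded from $\Pi$ by convention; a short separate argument shows they do not affect the bound, since configurations with many orthogonal pairs are forced onto few lines through the origin and in fact determine even more distinct products.) I do not expect a genuine obstacle here: the only delicate point is that the incidences attached to a given $\alpha$ are honestly distinct, which is precisely what injectivity of $p\mapsto\ell_\alpha(p)$ (valid because $\alpha\neq 0$) guarantees. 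It is worth noting that handling all values of $\alpha$ simultaneously would force one to deal with coincidences $\ell_\alpha(p)=\ell_\beta(p')$ for $p,p'$ collinear with the origin --- the degenerate phenomenon alluded to in the hypotheses of Theorem~\ref{main1} --- whereas fixing $\alpha$ avoids this entirely, at the cost of the weaker exponent $\tfrac23$.
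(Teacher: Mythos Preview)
Your argument is correct and matches the paper's own proof essentially line for line: fix a nonzero $\alpha$, use the injectivity of $p\mapsto\ell_\alpha(p)$ to get $n$ lines, apply Szemer\'edi--Trotter to bound the multiplicity of $\alpha$ by $\lesssim n^{4/3}$, and then divide $n^2$ by this uniform multiplicity bound. Your treatment is slightly more explicit about why injectivity requires $\alpha\neq 0$ and about the handling of zero dot products, but these are cosmetic differences rather than a different approach.
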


\begin{proof}
Fix any $\alpha \neq 0,$ construct the set of $\alpha$-lines $\ell_\alpha(p)$ for every $p\in E.$ Any time a point $q$ is on an $\alpha$-line $\ell_\alpha(p)$, we have that $p\cdot q = \alpha,$ and vice versa. So the number of times that the dot product $\alpha$ occurs between pairs of points in the set $E$ is exactly the number of incidences between points and $\alpha$-lines. Since there were $n$ points, and each determines a unique $\alpha$-line, we have $n$ lines. So by Theorem \ref{ST}, we have that $\alpha$ can occur no more than $\lesssim \left(n^2\right)^\frac{2}{3}= n^\frac{4}{3}$ times. Since this is true for any $\alpha\neq 0,$ and there are $n^2$ distinct pairs of points, we get that the number of distinct dot products must be at least
\[ \frac{n^2}{n^\frac{4}{3}} \gtrsim n^\frac{2}{3}.\]
\end{proof}

Corollary \ref{ezdp} shows that there are many distinct dot products, but it does not show that any particular point determines many distinct dot products. For this, we introduce another concept. Given a point $p$ and a set of points $E,$ we define the {\it pinned} dot product set $\Pi_p(E),$ to be the set of dot products determined by $p$ and the points in the set $E$. In this case, we call the point $p$ a {\it pin}. Specifically,
\[\Pi_p(E):=\{p\cdot q: q\in E\}.\]

\subsection{The pinned result}

We now state a pinned version of Corollary \ref{ezdp}.

\begin{theorem}\label{pinned}
Given two subsets $E$ and $F$ of $\mathbb R^2$ consisting of $n$ points each, where $E$ has no more than $\gtrsim n^\frac{2}{3}$ points on any line through the origin, we have that there is a subset $E'\subset E$ of size $|E'|\geq \frac{1}{2}n$ with the property that for any $x\in E',$ the number of distinct dot products determined by $x$ and the points in $F$ is at least $\gtrsim n^\frac{2}{3}.$ Moreover, if $E=F,$ we get the same results without the hypothesis about the number of points on any line through the origin.
\end{theorem}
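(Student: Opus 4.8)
The plan is to run a double-counting argument with the Szemerédi--Trotter theorem, but carefully enough to get a \emph{pinned} bound for most pins rather than just an average bound. Fix a nonzero dot product value $\alpha$; for each $x\in E$ form the $\alpha$-line $\ell_\alpha(x)$, and recall that distinct points give distinct $\alpha$-lines. An incidence between a point of $F$ and one of these $n$ lines records a pair $(x,y)\in E\times F$ with $x\cdot y=\alpha$. Summing over a set $A$ of $\approx n^{2/3}$ distinct nonzero values $\alpha$ (which exist by Corollary~\ref{ezdp} applied to, say, $E\cup F$, or simply by taking the $\alpha$ that actually occur), the total number of incidences equals the number of triples $(x,y,\alpha)$ with $x\cdot y=\alpha\in A$. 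On the one hand this is at least the number of pairs $(x,y)$ with $x\cdot y\in A$; on the other hand, Theorem~\ref{ST} bounds the incidences for each fixed $\alpha$ by $\lesssim (n\cdot n)^{2/3}+n = n^{4/3}$, so over $|A|\approx n^{2/3}$ values we get $\lesssim n^{4/3}\cdot n^{2/3}=n^2$ incidences in total --- but this only gives an average, so I instead fix a \emph{single} $\alpha$ and count pins directly, as follows.

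Here is the cleaner route. For a pin $x\in E$, the quantity $|\Pi_x(F)|$ is small exactly when $F$ is concentrated on few lines perpendicular to the line through $x$ and the origin. So call $x\in E$ \emph{bad} if $|\Pi_x(F)| \lesssim n^{2/3}$, i.e. $\lesssim C_\epsilon n^{2/3+\epsilon}$ for a suitable small constant; we want $|\{x : x \text{ bad}\}| \le \tfrac12 n$. Suppose not. Count incidences $I$ between the point set $F$ and the line set $L=\{\ell_\alpha(x) : x\text{ bad},\ \alpha\in\Pi_x(F)\}$. Each bad $x$ contributes $|\Pi_x(F)|\lesssim n^{2/3}$ lines, so $|L|\lesssim n\cdot n^{2/3}=n^{5/3}$; and $I = \sum_{x\text{ bad}} |F| = |\{x\text{ bad}\}|\cdot n \gtrsim n^2$, since for every bad $x$ and every $y\in F$ the point $y$ lies on $\ell_{x\cdot y}(x)\in L$. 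Now Szemerédi--Trotter gives $I \lesssim (|L|\,|F|)^{2/3} + |L| + |F| \lesssim (n^{5/3}\cdot n)^{2/3} + n^{5/3} = n^{16/9} + n^{5/3} = o(n^2)$, a contradiction (the $\epsilon$ losses in the $\gtrapprox$ notation do not touch the constant $16/9 < 2$). Hence the bad pins number at most $\tfrac12 n$, and $E' := E \setminus \{x\text{ bad}\}$ works, with the understanding that distinct $x$ genuinely give distinct lines so there is no overcounting of $|L|$; zero dot products are excluded by the standing convention, so all the $\alpha$'s above are nonzero.

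The one genuine subtlety --- and where the hypothesis on lines through the origin enters --- is that the lines $\ell_\alpha(x)$ for a \emph{fixed} $x$ and varying $\alpha$ are all parallel, and more dangerously, $\ell_\alpha(x)$ and $\ell_{\alpha'}(x')$ coincide whenever $x,x'$ lie on a common line through the origin (they are perpendicular to the same direction). Thus if $E$ had a huge collinear-through-origin subset, the line set $L$ could collapse: many distinct pairs $(x,\alpha)$ would name the same physical line, destroying the bound $|L|\lesssim n^{5/3}$ and hence the incidence count $I\gtrsim n^2$ (a single line through many points of $F$ would be counted once geometrically but should account for one dot product per pin). The hypothesis that $E$ has $\lesssim n^{2/3}$ points on any line through the origin controls exactly this multiplicity: each physical line in $L$ is named by at most $\lesssim n^{2/3}$ pairs, so $I$, counted \emph{with} multiplicity as $\sum_{x\text{ bad}}|F|$, still forces $|L|$ (the geometric count) to be large enough that Szemerédi--Trotter --- applied to the set of \emph{distinct} lines --- is violated; I will need to track this multiplicity bookkeeping carefully, as it is the heart of the argument. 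Finally, when $E=F$, the hypothesis is unnecessary: if $E$ had $\gtrsim n^{2/3+\epsilon}$ points on a line $\ell$ through the origin, pick any $x\in E\setminus\ell$ (such $x$ exists, else $E\subseteq\ell$ and $|\Pi_x(E)|$ is trivially large for $x=0\notin$... — more carefully, take $x$ off the line or a nonzero point of $\ell$ itself); the perpendicular projections of those $\gtrsim n^{2/3+\epsilon}$ points onto the direction of $x$ are distinct (distinctness of the $\alpha$-lines), giving $|\Pi_x(E)| \gtrsim n^{2/3}$ immediately, and such an $x$ can be found for all but a negligible set of pins. I expect the multiplicity accounting in the $E\ne F$ case to be the main obstacle; the $E=F$ reduction is comparatively routine.
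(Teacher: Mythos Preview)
Your approach is genuinely different from the paper's and, once the multiplicity bookkeeping you flag is completed, it does work. The paper argues via Sz\'ekely's multigraph crossing-number lemma: it builds a multigraph on $E\cup F$ whose edges connect consecutive $F$-points along each $\alpha$-line of each $p\in E$, bounds the edge multiplicity by the radial-line hypothesis, compares the crossing-number lower bound $\gtrsim n^{10/3}$ against an upper bound $\lesssim n^2 t^2$ (where $t=\max_{p\in E}|\Pi_p(F)|$), extracts $t\gtrsim n^{2/3}$, and then \emph{iterates} by removing the good pin and repeating to produce $n/2$ good pins. You instead suppose more than $n/2$ bad pins and reach a Szemer\'edi--Trotter contradiction directly, which is cleaner in that it avoids the removal iteration entirely.

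The step you correctly isolate as the crux is that $\ell_\alpha(x)=\ell_\beta(x')$ whenever $x,x'$ share a radial line, so the count $I\gtrsim n^2$ in your second paragraph is with multiplicity and Theorem~\ref{ST} does not bound it directly; your third paragraph flags this but does not finish it. The fix is to group bad pins by radial line. If $r_1,\dots,r_{|R|}$ are the radial lines carrying bad pins, then the hypothesis $b_i\lesssim n^{2/3}$ on each together with $\sum_i b_i>n/2$ forces $|R|\gtrsim n^{1/3}$. All pins on $r_i$ share the same family of perpendicular lines through $F$, of size $t_i\le c\,n^{2/3}$ (this is exactly $|\Pi_x(F)|$ for any $x$ on $r_i$), and lines from different $r_i$ have different directions, so the distinct-line set satisfies $|L^*|=\sum_i t_i\le c\,n^{2/3}|R|$. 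The \emph{geometric} incidence count is exactly $I^*=n|R|$, since every $y\in F$ lies on one line from each family. Now Theorem~\ref{ST} yields $n|R|\lesssim (c\,n^{2/3}|R|\cdot n)^{2/3}$, i.e.\ $|R|\lesssim c^2 n^{1/3}$, contradicting $|R|\gtrsim n^{1/3}$ once $c$ is a small enough absolute constant. This is precisely where the radial-line hypothesis enters, paralleling its role as the edge-multiplicity bound in the paper's crossing-number proof. Your treatment of the $E=F$ case is in the same spirit as the paper's: points on a rich radial line are automatically good (distinct scalar multiples give distinct dot products along that line), so one may assume the hypothesis for the remainder; both the paper and your sketch leave this reduction somewhat informal.
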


The proof we give essentially comes from Section 9.2 in \cite{GIS}, but Theorem \ref{pinned} highlights the pinned nature of the result, which is not explicitly stated in the original text. Moreover, we slightly modify the proof to get a result for two sets of points instead of one, which is what we will need later. Unfortunately, because we are specifying which set the good pin must come from, this brings in the possibility of pathological pairs of point sets. For example, if $E$ is a set of $n$ points on a line through the origin and $F$ is a set of $n$ points on a line perpendicular to the line containing $E$, then we would have that each point $x\in E$ we have $|\Pi_x(F)|=1.$ However, if $E=F$ above, we will see in the proof that we can drop the hypothesis that no line through the origin has more than $\gtrsim n^\frac{2}{3}$ points of $E.$

In order to prove this result, we will rely on a version of the so-called crossing number lemma. In order to state the lemma, we remind the reader of two concepts. First, a multigraph is a generalization of a graph allowing for multiple distinct edges connecting pairs of vertices. We refer to the number of distinct edges connecting a given pair of vertices as the edge multiplicity of that pair of vertices. Second, given a graph $G$, its crossing number, denoted $cr(G)$ is the minimum number of pairs of edges that cross taken over any drawing of $G$ in the plane. So, a planar graph has crossing number zero, but any graph that is not planar has a positive crossing number. We now state a multigraph version of the crossing number lemma, from Sz\' ekely in \cite{Szek}.
\begin{lemma}\label{CNL}
Given a multigraph $G$ with $v$ vertices, $e$ edges, and a maximum edge multiplicity of $m$, satisfying $e>5mv,$ we have that
\[cr(G) \gtrsim \frac{e^3}{mv^2}.\]
\end{lemma}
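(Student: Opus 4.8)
The plan is to run the standard two-stage argument for crossing-number inequalities directly in the multigraph setting: first an elementary Euler-type bound, then a probabilistic amplification by random vertex deletion.

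\emph{Stage one (a linear bound).} Euler's formula gives that every simple planar graph on $V\ge 3$ vertices has at most $3V-6$ edges. If $H$ is a planar multigraph on $V$ vertices with maximum edge multiplicity at most $m$, then deleting all but one edge from each parallel class leaves a simple planar graph, so $H$ has at most $m(3V-6)\le 3mV$ edges. Hence for an arbitrary multigraph $H$ with $V$ vertices, $E$ edges, and maximum multiplicity at most $m$, I would take a drawing realizing $cr(H)$ crossings, delete one edge from each crossing pair to planarize, and conclude
\[cr(H)\ \ge\ E-3mV .\]
(This is trivially true for $V\le 2$ as well, so no small-case exception arises.)

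\emph{Stage two (amplification).} Let $G$ be as in the statement. I would assume $G$ has no loops and fix a drawing of $G$ with $cr(G)$ crossings in which no two edges sharing a vertex cross --- the usual normalization of a crossing-minimal drawing --- so that every crossing involves four distinct vertices. For a parameter $p\in(0,1]$ I would form the random induced sub-multigraph $G_p$ by retaining each vertex independently with probability $p$. Then $\mathbb E[v(G_p)]=pv$, $\mathbb E[e(G_p)]=p^{2}e$, the maximum multiplicity of $G_p$ is still at most $m$, and the restriction of the fixed drawing to $G_p$ is a drawing in which each crossing survives with probability $p^{4}$, so $\mathbb E[cr(G_p)]\le p^{4}\,cr(G)$. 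Feeding $G_p$ into Stage one and taking expectations gives
\[p^{4}\,cr(G)\ \ge\ \mathbb E\big[cr(G_p)\big]\ \ge\ \mathbb E\big[e(G_p)\big]-3m\,\mathbb E\big[v(G_p)\big]\ =\ p^{2}e-3mpv ,\]
hence $cr(G)\ge e/p^{2}-3mv/p^{3}$. Choosing $p$ to be a suitable constant multiple of $mv/e$ --- which lies in $(0,1]$ precisely because $e>5mv$ --- and balancing the two terms then yields a lower bound of the shape $cr(G)\gtrsim e^{3}/(mv^{2})$.

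The step I expect to be the real obstacle is pinning down the exponent of $m$. With the planar estimate $E-3mV$ --- which cannot be improved, since a planar multigraph really can carry $\asymp mV$ edges --- the optimization above produces only $cr(G)\gtrsim e^{3}/(m^{2}v^{2})$; to reach the single power of $m$ in the statement one should additionally thin each parallel class, for instance by a second independent deletion that keeps each edge with probability of order $1/m$, and then rerun the expectation bookkeeping while checking that this extra randomness does not cost a logarithmic factor. A minor and routine point is the drawing normalization in Stage two --- that in a crossing-minimal drawing no two edges meeting at a common vertex need cross --- which is exactly what makes the $p^{4}$ survival probability for crossings valid.
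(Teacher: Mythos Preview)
The paper does not prove this lemma; it is quoted from Sz\'ekely's paper and used as a black box. So there is no in-paper argument to compare against, and the question is simply whether your outline can be completed.

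Your two-stage framework is the right one, and your diagnosis is accurate: with the multigraph Euler bound $cr(H)\ge E-3mV$ followed by vertex sampling, the optimization genuinely gives only $e^{3}/(m^{2}v^{2})$. The fix you gesture at---thinning each parallel class by an independent $1/m$ coin---does close the gap, but you stopped short of checking it, and the bookkeeping is where the content is. Here is what makes it go through: after keeping each edge with probability $1/m$, deterministically simplify the surviving multigraph to a simple graph $G'$. For a class of size $k\le m$, the probability that at least one edge survives is $1-(1-1/m)^{k}\ge k/(2m)$, so $\mathbb E[e(G')]\ge e/(2m)$; meanwhile the induced drawing of $G'$ has expected crossing count at most $cr(G)/m^{2}$. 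Now apply the \emph{simple} Euler bound $cr(G')\ge e(G')-3v$ and take expectations to obtain $cr(G)/m^{2}\ge e/(2m)-3v$, i.e.\ the sharpened linear bound $cr(G)\gtrsim me - m^{2}v$. Running vertex sampling on \emph{this} inequality (or, equivalently, doing vertex sampling and edge thinning together) and optimizing at $p\asymp mv/e$ now yields $cr(G)\gtrsim e^{3}/(mv^{2})$, with no logarithmic loss. The hypothesis $e>5mv$ is exactly what keeps $p\le 1$.

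So your proposal is correct in outline, but as written it is incomplete: the passage from $m^{2}$ to $m$ is the entire point of the multigraph version, and it requires the simplification step after thinning so that the simple-graph Euler bound $E-3V$ (not $E-3mV$) becomes available inside the expectation.
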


With this in tow, we prove Theorem \ref{pinned}.

\begin{proof}
We will also assume that the origin is not in $E$ or $F$, as it can only determine the dot product zero anyway. Construct a graph, $G$, whose vertex set is the point set $E\cup F$. Note, although we defined a graph $G_E$ related to a point set $E$ above, this graph will have very different edges than $G_E$. For every point $p\in E$, draw the $\alpha$-lines for each $\alpha\in\Pi_p(F).$ By definition, for each $p$, the associated lines drawn will be parallel to one another. Now, for every such $\ell_\alpha(p)$ with more than one point on it, we connect vertices by an edge if their corresponding points are consecutive along that line. Notice that we will disregard such lines with exactly one point on them. However, if $p$ and $q$ lie on the same radial line, it is possible that $\ell_\alpha(p)=\ell_\beta(q).$ In this case, the construction of the graph $H$ could lead to multiple edges between some pairs of vertices. Any time that there are multiple edges between a pair of points, $r$ and $s$, draw each edge as a smooth curve, $\mathcal C,$ connecting the appropriate points close enough to the line segment, $\mathcal L,$ connecting those points so that there are no other points from $E$ in the region bounded by $\mathcal C, \mathcal L,$ and the points $r$ and $s.$


We now deal with a possible degenerate case. Suppose there is a point $p$ whose $\alpha$-lines contribute much fewer than $n$ edges. In this case, this means that for most of the values of $\alpha\in\Pi_p(F)$, we have that $\ell_\alpha(p)$ only has one point. That would imply that $p$ determines $\gtrsim n$ distinct dot products with points in $F$, and we would be done. So we can assume that it takes about $n$ edges for each point in $E.$ Therefore, there must be about $n^2$ edges in $H$.

Define $t$ to be the maximum number of distinct dot products determined by any point in $p\in E$ with points in $F$. That is, we define
\[t := \max_{p\in E}\{p\cdot q:q\in F\}.\]
The the bulk of rest of the proof is devoted to showing that $t\gtrsim n^\frac{2}{3}.$ For any fixed radial line, $\ell$, the vertices of consecutive points along each of the parallel lines perpendicular to $\ell$ will be connected by as many edges as there are points on $\ell$. Because of our hypothesis on the set $E$, we know $\ell$ has no more than $n^\frac{2}{3}$ points from $E$ on it. Thus, in our graph, the maximum edge multiplicity will be $m\lesssim n^\frac{2}{3}$.

We pause here to note a slight change that applies to the case where $E=F$ to handle the `moreover' part of the theorem. Namely, if we were to have $E=F,$ we would not need the hypothesis on the number of points on a radial line, as if there were more than $n^\frac{2}{3}$ points on a radial line, any of those points would determine at least as many distinct dot products as are on the radial line, with the points on the radial line in question, so each such point would automatically satisfy the conclusion of the theorem statement.

Now that we know the number of vertices, edges, and maximum edge multiplicity, we can try to apply Lemma \ref{CNL}. Recall that we need $e>5mv$ to apply the lemma. If this condition were to fail, we would have $e\leq 5mv$, but plugging in our values of $e, m,$ and $v$ would give us $n^2 \lesssim 5 n^\frac{2}{3} n$ which is a contradiction. Therefore, we can proceed assuming that this condition holds, and by plugging in values of $e, m,$ and $v,$ we are left with
\[cr(G) \gtrsim \frac{e^3}{mv^2} \gtrsim \frac{n^6}{n^\frac{2}{3}n^2} \gtrsim n^\frac{10}{3}.\]

Next, we will pair this with an upper bound for the crossing number of $G.$ Any crossing between edges only occurs when a line perpendicular to the radial line of one point, Say $p$, crosses a line perpendicular to the radial line of another point, say $q.$ Since each point has no more than $t$ such associated parallel lines (its $\alpha$-lines), each pair of points can contribute at most $t^2$ crossings. Because we know that there are about $n^2$ different pairs of points, we are guaranteed that the total number of crossings is bounded above by $\lesssim n^2t^2$. Now, while we don't know what the minimum number of crossings in any drawing of $G$ in the plane must be, we can be sure that it is no more than $n^2t^2$, because we have found an explicit drawing with no more than $n^2t^2$ crossings. Putting the upper and lower bounds for the crossing number together:
$$n^\frac{10}{3} \lesssim cr(G) \lesssim n^2t^2.$$
This tells us that $t\gtrsim n^\frac{2}{3},$ as claimed. Now, we have shown the existence of one point $x\in E$ that determines $\gtrsim n^\frac{2}{3}$ distinct dot products with points in $F$. We call this point a {\it good pin}. To finish the proof, we just need to show that there are many good pins. To see this, define the set $E_1:=E\setminus\{x\},$ and set $F_1$ to be any subset of $F$ with $n-1$ points. We can run the same argument to get that there is a good pin $x_1\in E_1,$ that determines many distinct dot products with the points in $F_1.$ Repeat this process until we have at least $m = \lceil n/2 \rceil$ such good pins, and obtain the set $E_m.$ To finish, set $E'=E_m,$ which is of size $\geq n/2,$ and consists entirely of good pins.
\end{proof}


\subsection{Proof of Theorem \ref{highDim1}}
With a simple pigeonholing argument, we can use Theorem \ref{pinned} to get a pinned result in higher dimensions in the case that $E=F$.

\begin{proof}
Suppose $t_1$ is the maximum number of parallel $(d-1)$-hyperplanes supported by points in $E$ that are the level sets of dot products determined by points in $E.$ That is, these are the higher dimensional analogs of $\alpha$-lines. Let $x$ be such a point determining $t_{1}$ distinct dot products with points in $E.$ Pick one of these parallel $(d-1)$-hyperplanes, with as many or more points than any of the other parallel $(d-1)$-hyperplanes and call it $P_{1}$. By the pigeonhole principle, we can see that it has at least $\gtrsim n /t_1$ points of $E$. Now we repeat this process within $P_1.$ Specifically, we select $x_{2}\in P_1$ be a point determining $t_{2}$ distinct dot products, so that no other points in $P_{1}$ determine more dot products with points in $P_1.$ As before, pick a $(d-2)$-hyperplane $P_{2}$, that contains greater than $(n/t_{1})/t_{2}$ points of $E$. We continue doing this, getting successive hyperplanes of one dimension lower until $P_{d-2}$ is a plane with $(n)/(t_{1}t_{2}...t_{d-2})$ points. By appealing to Theorem \ref{pinned}, we see that this plane has a point $x_{d-2}$ that determines $(n/(t_{1}t_{2}...t_{d-2}))^{\frac{2}{3}}$ distinct dot products.

Now we collect what we know about each of the points $x_1, \dots, x_{d-1}.$ Namely, $x_1$ determines $t_1$ distinct dot products with the points of $E$, $x_2$ determines $t_2,$ distinct dot products with the points of $E$, and for all other $j<d-1,$ we have that $x_j$ determines $t_j$ distinct dot products with the points of $E$. Finally, $x_{d-1}$ determines is at least $\gtrsim ((n)/(t_{1}t_{2}...t_{d-1}))^{\frac{2}{3}}$ distinct dot products with the points in $E$. Putting all of this together, we have
\begin{equation}\label{hiDimPinned}
\max_{x\in E}\left|\Pi_x(E)\right|\gtrsim \max\left\{t_1, t_2, \dots, t_{d-2}, ((n)/(t_{1}t_{2}...t_{d-2}))^{\frac{2}{3}}\right\}.
\end{equation}
Now we consider the case when all of the components are approximately equal, namely if
\[t_1 \approx t_2 \approx \dots \approx ((n)/(t_{1}t_{2}...t_{d-2}))^{\frac{2}{3}}.\]
In this case, we get that
\[t_1\approx \left(\frac{n}{t_1^{d-2}}\right)^\frac{2}{3},\]
which gives us 
\[\max_{x\in E}\left|\Pi_x(E)\right| \approx t_1\gtrsim n^\frac{2}{2d-1}.\]
Now we deal with the case that one of the $t_j$ is significantly bigger one of the others. It is straightforward to see that this would only increase the lower bound for $\left|\Pi_x(E)\right|$. The same is true if the final term, $((n)/(t_{1}t_{2}...t_{d-2}))^{\frac{2}{3}},$ is significantly larger than the $t_j.$ To prove this rigorously, one can set up a Lagrange multiplier argument, or take logarithms of each term in base $n$ and use linear programming.



To summarize, we are able to pigeonhole our previous argument onto higher dimensions by running the algorithm on each hyperplane determined by our points. In general, our lower bound for higher dimensions is:
$$\max_{x\in E}\left|\Pi_x(E)\right|\gtrsim n^{\frac{2}{2d-1}}.$$
\end{proof}

\subsection{Proof of Theorem \ref{main1}}

This argument closely follows the arguments in \cite{PST}, by Dung, Pham, and the fourth listed author, which was inspired by the work of Ou and Taylor in \cite{OT}. The proof of Theorem \ref{main1} will essentially follow by applying this more technical result. For brevity, we often conflate a point in $\mathbb R^2$ with its associated vertex in a graph. Given a tree $T$ with a vertex $v$, we will often refer to the pair of them by $(T,v).$

In essence, what the theorem says is that if we fix a tree $T$ and isolate a vertex $v$, then there are many points $x$ that serve as the vertex $v$ in many trees $T'$, isomorphic to $T$, and that these isomorphic copies $T'$ have a wide variation of dot product edge weights. 
\begin{theorem}\label{tech1}
Given two subsets $E$ and $F$ of $\mathbb R^2$ consisting of $n$ points each, where $E$ has no more than $n^\frac{2}{3}$ points on any line through the origin, a tree $T$ with $k$ edges, and an arbitrary vertex $v$ of $T$, we have that there is a subset $E'\subset E$ of size $|E'|\geq 2^{-k}n-o(n)$ with the property that for any $x\in E',$ the number of distinct edge weight $k$-tuples, $\vec w,$ from trees $T'_{\vec w}$ with $(T,v)$ isomorphic to $(T',x),$ and $T_{\vec w}\setminus \{x\}\subset F$ is at least $\gtrsim n^\frac{2k}{3}.$
\end{theorem}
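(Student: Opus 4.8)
The plan is to induct on the number of edges $k$ of the tree $T$, using Theorem \ref{pinned} as the base case and as the engine for the inductive step. Root the tree $T$ at the designated vertex $v$. The base case $k=1$ is essentially Theorem \ref{pinned}: the tree is a single edge, and for all but at most $n/2$ points $x\in E$, the pin $x$ determines $\gtrsim n^{2/3}$ distinct dot products with $F$, each such dot product giving a distinct weight $1$-tuple; so $E'$ can be taken of size $\geq \frac12 n$, which comfortably exceeds $2^{-1}n - o(n)$.

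For the inductive step, consider the children $v_1,\dots,v_r$ of $v$ in the rooted tree, and let $T^{(1)},\dots,T^{(r)}$ be the subtrees hanging below them, with $k_1,\dots,k_r$ edges respectively, so $k = r + \sum k_i$. I would first apply Theorem \ref{pinned} (the two-set version) to $E$ and $F$ to pass to a subset $E_0 \subseteq E$ with $|E_0| \geq \frac12 n$ such that every $x\in E_0$ has $|\Pi_x(F)| \gtrsim n^{2/3}$. Then, for each child subtree $(T^{(i)}, v_i)$, apply the inductive hypothesis — but here is the subtlety: I want the \emph{same} set of good pins to work for all $i$ simultaneously. So I would apply the inductive hypothesis with $F$ playing the role of the ``$E$'' side and also the ``$F$'' side (or rather, carefully choose the sets so that the conclusion is a statement about pins in $F$), obtaining for each $i$ a subset of $F$ of density $2^{-k_i}$; intersecting these across $i=1,\dots,r$ costs a factor of at most $2^{-\sum k_i}$ in density. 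The point $x\in E_0$ then has the property that among its $\gtrsim n^{2/3}$ choices of edge to a child, for each fixed choice of image points $y_1,\dots,y_r \in F$ (in the good subset) for $v_1,\dots,v_r$, one can independently complete each $T^{(i)}$ below $y_i$ in $\gtrsim n^{2k_i/3}$ distinct weight-tuples.

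Now I would count distinct weight $k$-tuples realized at a fixed good pin $x$. The $r$ edges from $x$ to its children can be chosen with $\gtrsim (n^{2/3})^r = n^{2r/3}$ distinct ordered $r$-tuples of dot product values (using that each child point is free and $|\Pi_x(F)|\gtrsim n^{2/3}$ — one must be slightly careful that distinct child-points giving distinct values is what feeds the induction, but distinct \emph{weights} is what we are counting, and we can restrict to a subset of $F$ realizing $\gtrsim n^{2/3}$ distinct pinned products, one point per product). Having fixed the child points, the subtrees contribute $\prod_i \gtrsim n^{2k_i/3}$ further distinct weight-tuples, and since these blocks of coordinates are disjoint in the weight vector $\vec w$, the total is $\gtrsim n^{2r/3}\prod_i n^{2k_i/3} = n^{\frac{2}{3}(r + \sum k_i)} = n^{2k/3}$, as desired. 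The density of the final good set $E'$ is $\geq \frac12 \cdot 2^{-\sum k_i} \cdot (1 - o(1)) \geq 2^{-k}n - o(n)$, absorbing the lower-order losses from the repeated deletions in Theorem \ref{pinned} and from rounding into the $o(n)$ term.

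The main obstacle I anticipate is bookkeeping the set-membership constraints so the induction closes cleanly: Theorem \ref{pinned} produces good pins in a \emph{designated} set $E$, but in the recursion the ``pin'' for a child subtree is a point that must simultaneously (a) lie in $F$, (b) be reachable from $x$, and (c) be a good pin for \emph{its} subtree relative to $F$. Making sure that the set of admissible child-points retains size $\approx n$ (so that $|\Pi_x(\cdot)| \gtrsim n^{2/3}$ survives the restriction) and that the branches do not interfere — i.e.\ that we never need the same point to play two structural roles at once, which would break the product counting — is the delicate part; the line-through-origin hypothesis on $E$ is needed only at the root, since all deeper applications can be run with $E=F$ and invoke the ``moreover'' clause of Theorem \ref{pinned}. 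A secondary point to handle carefully is that Theorem \ref{pinned} gives distinct \emph{dot products} at a pin, whereas an isomorphic tree copy requires distinct \emph{points}; one resolves this by selecting, once and for all, a transversal in $F$ hitting each pinned value once, at the cost of shrinking $F$ by a factor that does not affect the exponents.
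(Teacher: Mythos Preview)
Your inductive framework is right, but the decomposition you chose forces you into two difficulties that the paper's decomposition avoids. First, the claim that ``intersecting these across $i=1,\dots,r$ costs a factor of at most $2^{-\sum k_i}$ in density'' is simply false: the intersection of subsets of $F$ of densities $2^{-k_1},\dots,2^{-k_r}$ can be empty. What you need instead is a \emph{sequential} refinement --- apply the hypothesis for $T^{(1)}$ to get $F_1\subset F$, then apply the hypothesis for $T^{(2)}$ with $F_1$ (not $F$) as the pin-set to get $F_2\subset F_1$, and so on --- which does yield the product of the density factors. Second, and more structurally, every time you apply the inductive hypothesis with $F$ (or a subset of it) in the role of the pin-set, you need the radial-line condition on that set, which $F$ is not assumed to satisfy. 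You invoke the ``moreover'' clause of Theorem~\ref{pinned}, but that clause covers only the single-edge case; for your recursion to close you would have to simultaneously carry along and prove an $E=F$ version of Theorem~\ref{tech1} with no radial hypothesis. That parallel induction is doable, but you have not set it up, and it is not a triviality you can wave through.

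The paper sidesteps both issues with a different decomposition: rather than peeling off all children of $v$ and moving the pin into $F$, it splits $T$ at $v$ into just two subtrees $T_1,T_2$ that \emph{both contain $v$} (when $v$ is not a leaf), partitions $F=F_1\sqcup F_2$, and applies the inductive hypothesis first to $(E,F_1,T_1,v)$ to get $E'\subset E$, then to $(E',F_2',T_2,v)$ to get $E''\subset E'$. Because $v$ remains the designated vertex in both subtrees, the pin-set never leaves $E$, the radial hypothesis is always available, and the nesting $E''\subset E'\subset E$ replaces your intersection automatically. The partition $F=F_1\sqcup F_2$ also disposes of the branch-disjointness issue you correctly flagged as delicate.
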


\begin{proof}
We now prove Theorem \ref{tech1} by induction on $k.$ The base case is when $k=1,$ which holds by applying Theorem \ref{pinned}. We assume that the conclusion holds for any tree on $k$ vertices, and then show that it also holds for any tree on $k+1$ vertices. We will do this by splitting into two cases: the case where $v$ is a leaf of $T$ (only adjacent to one other vertex of $T$) and the case where $v$ is not a leaf of $T$.

If $v$ is a leaf of $T,$ then it has a unique adjacent vertex, which we call $u.$ Notice that the tree $S:=T\setminus {v}$ has $k$ edges, so we can apply the induction hypothesis to get that there is a subset $E'\subset E$ with size at least $2^{-k}n$ such that for all $x\in E',$ there are $\gtrsim n^\frac{2k}{3}$ distinct dot product edge weight $k$-tuples $\vec w$ for trees $S'_{\vec w}$ with $x$ as a vertex so that $(S'_{\vec w},x)$ is isomorphic to $(S,u).$ Recall that for tree isomorphism here, we are not concerned with the edge weights. We are showing that there are many different trees with the same general shape, but different edge weights. Next, we can apply Theorem \ref{pinned} to the set $E'$ and any subset of $F'\subseteq F$ of size $|E'|$ to get that there a subset $E''\subseteq E'$ of size at least $|E'|/2,$ so that every point $x\in E''$ determines at least $\gtrsim n^\frac{2}{3}$ distinct dot products with points in $F'.$ Putting this all together, we have that there is a set, $E''$ of size at least $|E'|/2 \geq 2^{-(k+1)}n$ so that any point $x\in E''$ satisfies the conclusion of the theorem. This handles the case that $v$ is a leaf.

We now turn our attention to the case that $v$ is not a leaf. In this case, we will break up $T$ into two trees that share only the vertex $v.$ So $T_1$ is a tree with at least one edge and $v$ as a vertex, $T_2$ is also a tree with at least one edge with $v$ as a vertex, and the union of the two trees is $T$. Suppose the number of edges in $T_1$ is $k_1$, and the number of edges in $T_2$ is $k_2$. As $k$ is the total number of edges in $T$, and they are all accounted for without repeating by considering both $T_1$ and $T_2$, we can see that $k=k_1+k_2.$

\begin{minipage}{4.25in}
\hskip6em\includegraphics[scale=.33]{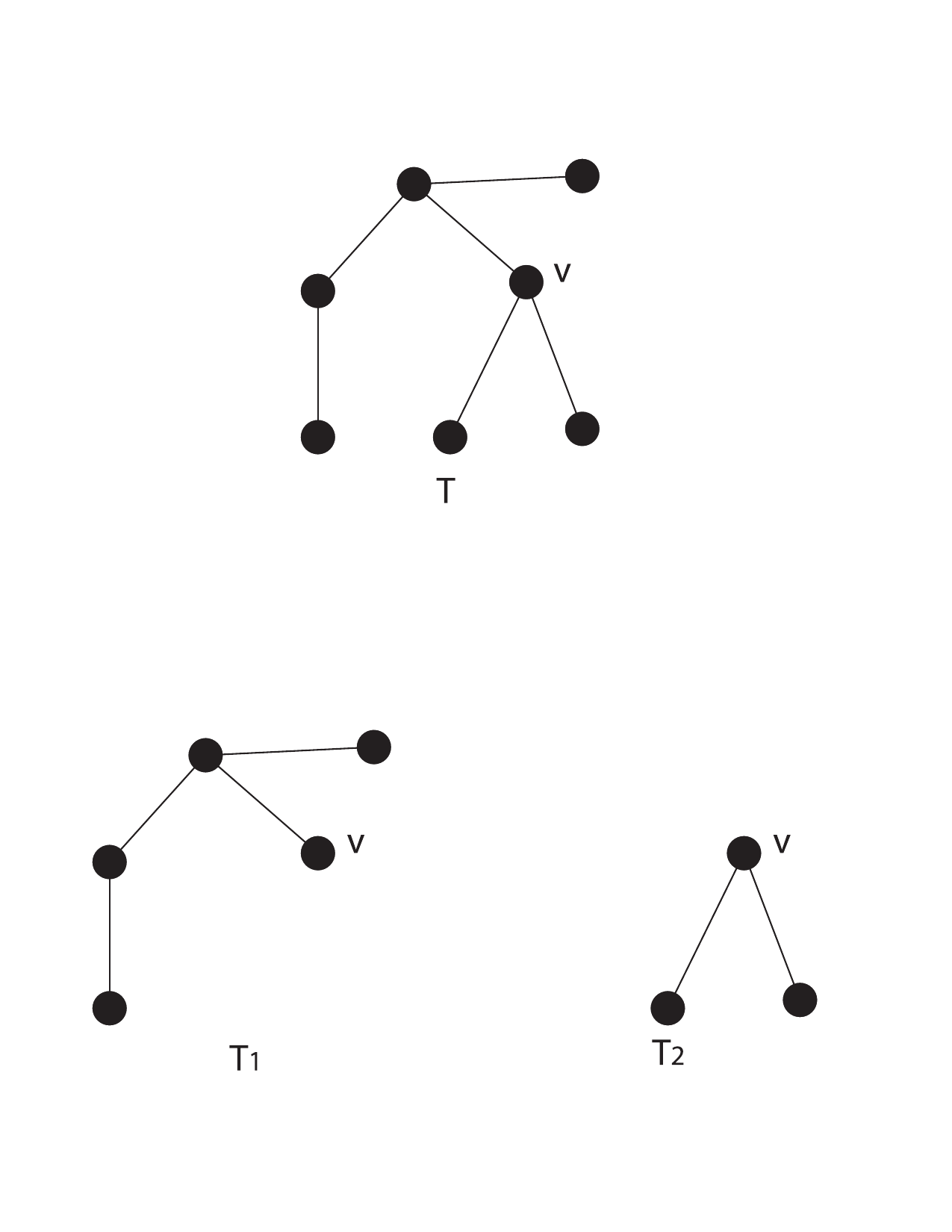}\label{dpTreesFig1}\\
{\bf Figure 2.4.1:} Here we can see the tree $T$ with vertex $v$ decomposed into $T_1$ and $T_2$, both of which also have vertex $v$.\\\\
\end{minipage}

Next, we arbitrarily partition $E$ into two subsets of equal size (either both $n/2$ or $(n+1)/2$ and $(n-1)/2$), so we have
\[E = E_1\sqcup E_2.\]
Next, we similarly partition $F$ into two subsets of nearly equal size, to get $F_1$ and $F_2$. We will now perform the following steps for both $E_1$ and $E_2$ separately. In what follows, we will often need various sets to be the same size, but they could be off by one. If this is the case, we merely delete a point from the larger set as needed providing a negligible error considering the fact that $n$ is large.

Because $k_1\leq k,$ we can apply the induction hypothesis to $E_i$ and $F_1$ with $(T_1,v)$ to find a subset $E_i'\subseteq E_i$ of size at least $2^{-k_1}|E_i|$ with the property that for every $x\in E_i'$, there are at least $\gtrsim n^\frac{2k_1}{3}$ distinct dot product $k_1$-tuples $\vec w_1$ corresponding to trees isomorphic to $T_1$ with $x$ serving as the vertex $v$ and the other vertices coming from points in $F_1$. Now, we find an arbitrary subset $F_2'\subseteq F_2$ of size $|E_1'|.$ Because $k_2\leq k,$ we can again appeal to the induction hypothesis for the sets $E_i'$ and $F_2'$ with the tree $T_2$ and vertex $v.$ This guarantees the existence of a set $E_i''\subseteq E_i'$ whose size is at least
\begin{equation}\label{Esize}
|E_i''|\geq 2^{-k_2}|E_i'| \geq 2^{-k_2-k_1}|E_i| = 2^{-k}|E_i|\geq 2^{-k-1}n - 1,
\end{equation}
with the property that for every $y\in E_i''$, there are at least $\gtrsim n^\frac{2k_2}{3}$ distinct dot product $k_2$-tuples $\vec w_2$ corresponding to trees isomorphic to $T_2$ with $y$ serving as the vertex $v$ and the other vertices coming from points in $F_2'.$ Because $E_i''\subseteq E_i',$ each of these points $y\in E_i''$ also determines at least $\gtrsim n^\frac{2k_1}{3}$ distinct dot product $k_1$-tuples $\vec w_1$ corresponding to trees isomorphic to $T_1$ with $y$ serving as the vertex $v$. Because we chose $F_1$ disjoint from $F_2,$ we know that the points in $F_1$ that serve as vertices for isomorphic copies of $T_1$ are disjoint from the points from $F_2$ that serve as vertices for the isomorphic copies of $T_2.$

We now generalize a term from before. Here, a {\it good pin} is a choice of $v$ determining sufficiently many distinct dot product tuples from their respective dot product trees, in analogy to the same term applied to single dot products in the proof of Theorem \ref{pinned}. Recall that we run the above argument for both $E_1$ and $E_2$, which were disjoint. So we can add the good pins $v$ coming from $E_1$ to the good pins $v$ coming from $E_2$ and appeal to \eqref{Esize} to get a lower bound on the number good pins to be
\[|E'|\geq |E_1''\sqcup E_2''| \geq \left(2^{-k-1}n-1\right) + \left(2^{-k-1}n-1\right) = 2^{-k}n-2 = 2^{-k}n-o(1),\]
as claimed.

To finish, notice that by definition, the dot product $k$-tuple $\vec w$ is what we get by combining $\vec w_1$ and $\vec w_2.$ Moreover, $F_1$ and $F_2$ were disjoint, so the total number of distinct dot product $k$-tuples $\vec w$ determined by isomorphic copies of $T$ with $v$ coming from $E$ and the other vertices coming from $F$ is at least the product of the number of dot product $k_1$-tuples $\vec w_1$ associated to isomorphic copies of $T_1$ from $F_1'$ times the number of dot product $k_1$-tuples $\vec w_2$ associated to isomorphic copies of $T_2$ coming from points in $F_2'.$
\begin{equation}\label{kSum}
\gtrsim n^\frac{2k_1}{3}\cdot n^\frac{2nk_2}{3} =n^\frac{2(k_1+k_2)}{3} = n^\frac{2k}{3},
\end{equation}
finishing the induction step, and therefore the proof of Theorem \ref{tech1}.
\end{proof}

To complete the proof of Theorem \ref{main1}, we need to look at lines through the origin. Now, if there is no radial line with more than $n^\frac{2}{3}$ points on it, we can repeatedly apply Theorem \ref{tech1} with $F=E$ to get the desired result. If there are radial lines with too many points, we have a decision to make. If half of the points in $E$ lie on radial lines with fewer than $n^\frac{2}{3}$ points, then we just apply Theorem \ref{tech1} with $E$ and $F$ consisting of these points. So all that remains is the case that at least half of the points are lying on radial lines with more than $n^\frac{2}{3}$ points each.

In this case, fix a radial line $\ell$ with more than $n^\frac{2}{3}$ points on it, then we can see that there are at least $n^\frac{2k}{3}$ different trees by selecting distinct choices of $x_i$ from $E_i\cap \ell.$ Without loss of generality, suppose $\ell$ is the $x$-axis. Then the points in $E_i$ have coordinates $(a_{i,j},0)$ where $j=1, \dots, n^\frac{2}{3}/k,$ and the $a_{i,j}$ are all distinct. Define
\[A_i:=\{a_{i,j}: j=1, \dots, n^\frac{2}{3}/k\}.\]
Now we can see that the different edge weights of our trees correspond to the elements of the Cartesian product of product sets of the form
\[A_iA_{i'}:=\{a_{i,j}a_{i',j}:a_{i,j}\in A_i, a_{i',j'}\in A_{i'}\}.\]
We now proceed by induction on $k.$ It is trivial to see that $|A_1A_{2}|\geq|A_1|,$ as one need only consider the products of a fixed element of one set with each of the distinct elements of the other. So the claim is true for $k=1.$ Now, suppose that we have $\gtrsim n^\frac{2(k-1)}{3}$ distinct $(k-1)$-tuples coming from the product sets $A_iA_{i'}$. We just need to show that these will lead to at least $\gtrsim n^\frac{2k}{3}$ distinct $k$-tuples when we factor in the final product set $A_{i''}A_{k+1},$ where the vertex numbered $k+1$ is a leaf adjacent to the vertex of index $i''$ that has already been dealt with. Now, for every choice of $a_{i'',j},$ there are $\gtrsim n^\frac{2}{3}/(k+1)$ distinct choices of $a_{k+1,j'}$, each leading to a different product, meaning there are at least $n^\frac{2}{3}/(k+1)$ distinct products for any of the $\gtrsim n^\frac{2(k-1)}{3}$ trees already determined, and we are done.

\subsection{Proof of Theorem \ref{main2}}

Theorem \ref{main2} will follow from the following two constructions, which we state separately as propositions. The first one is adapted from a construction given in \cite{KMS}.

\begin{proposition}\label{KMSprop}
    Given a tree $T$ on $k$ edges, there exists a set of $n$ points in $\mathbb R^d$ and a $k$-tuple of dot product edge weights $\vec w$ so that there are $\gtrsim n^{\lceil (k+1)/2\rceil}$ isomorphic copies of $T$ in the set whose edge weights are $\vec w.$
\end{proposition}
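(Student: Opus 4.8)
The plan is to exhibit an explicit configuration rather than argue abstractly. Since $T$ is a tree it is bipartite; I would fix a bipartition $A\sqcup B$ of its vertex set with $|A|\ge|B|$, and note that $|A|+|B|=k+1$ forces $|A|\ge\lceil(k+1)/2\rceil$. The strategy is to \emph{pin} the vertices of $B$ to a fixed set of points lying on a single line through the origin, and to let each vertex of $A$ range over a whole line's worth of points; because $A$ is an independent set, these choices will be completely decoupled, and they will multiply to give the product bound. Concretely, I would pick a unit vector $u$, place the vertices $b_1,\dots,b_{|B|}$ of $B$ at $\tau_1u,\dots,\tau_{|B|}u$ for distinct nonzero reals $\tau_i$, and for each $a\in A$ pick a nonzero real $\gamma_a$ (with the $\gamma_a$ distinct from one another and from every $\tau_i$) and consider the line $H_a:=\{x:x\cdot u=\gamma_a\}$, which is perpendicular to $\mathbb{R}u$; the $H_a$ are parallel and pairwise distinct.

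The weight tuple is then forced on us: I would declare that the edge $\{a,b_i\}$ of $T$ carries weight $\gamma_a\tau_i$ (all nonzero), which gives the desired $k$-tuple $\vec w$. The point of arranging $B$ on a radial line is that, for a fixed $a\in A$, every neighbour $b_i$ of $a$ is a scalar multiple of $u$, so the whole system of conditions ``$x\cdot b_i=\gamma_a\tau_i$ for each $b_i$ adjacent to $a$'' collapses to the single linear equation $x\cdot u=\gamma_a$, i.e.\ to $x\in H_a$. Thus a point of the plane can play the role of vertex $a$ precisely when it lies on $H_a$, with no interaction between distinct vertices of $A$. I would then build $E$ by putting $m:=\lfloor (n-|B|)/|A|\rfloor$ distinct points on each $H_a$ (chosen off $\mathbb{R}u$, which meets $H_a$ in a single point) together with the $|B|$ points $\tau_iu$; after adjusting by a bounded number of faraway points one may take $|E|=n$, and in any case $|E|=\Theta(n)$. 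Any assignment sending $b_i\mapsto\tau_iu$ and sending each $a\in A$ to one of its $m$ admissible points is injective and realizes a copy of $T$ whose edge weights are exactly $\vec w$; since the $|A|$ choices are independent this yields at least $m^{|A|}\gtrsim n^{|A|}\ge n^{\lceil(k+1)/2\rceil}$ copies of $T_{\vec w}$ (one divides by the $O(1)$ automorphisms of $T$ if one prefers to count unlabelled subtrees). Embedding this plane in $\mathbb{R}^d$ handles all $d\ge2$ at once.

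The step I expect to be the crux is the structural observation driving the construction: a vertex of $A$ of degree $\ge2$ would, for a generic placement of $B$, be pinned by its incidence conditions to a single point and contribute no freedom, so keeping a one-parameter family of positions for such vertices forces \emph{all} of its neighbours onto a common line through the origin; since vertices of $A$ share neighbours, the clean way to arrange this is to collapse the entire class $B$ onto one radial line and only then define $\vec w$ compatibly. Once that is in place, the remaining work — checking that the edge weights are consistent and nonzero, a bit of genericity to keep every point distinct and off the radial line, and the bookkeeping to reach $\Theta(n)$ points — is routine. Specializing this device to a path recovers exactly the chain construction of \cite{KMS}.
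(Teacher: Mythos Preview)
Your proposal is correct and is essentially the same construction as the paper's: both 2-colour the tree, pin the smaller colour class to single points on a line through the origin (the $x$-axis in the paper, your $\mathbb{R}u$), and let each vertex of the larger class range over $\approx n/|A|$ points on a line perpendicular to it, so that the $|A|$ independent choices multiply to $\gtrsim n^{|A|}\ge n^{\lceil(k+1)/2\rceil}$. Your write-up is in fact a bit more explicit than the paper's about why the construction works---namely the observation that collapsing $B$ onto a single radial line makes all of the dot-product constraints on a given $a\in A$ collapse to the single equation $x\cdot u=\gamma_a$---and you are more careful about injectivity, but the underlying idea is identical.
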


The second proposition is adapted from a construction given in \cite{IS20}.

\begin{proposition}\label{ISprop}
    Given a tree $T$ on $k$ edges, there exists a set of $n$ points in $\mathbb R^d$ so that there are $\gtrsim n^{1+k(d-1)/(d+1)}$ isomorphic copies of $T$ in the set whose edge weights are $1.$
\end{proposition}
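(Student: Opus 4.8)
The plan is to adapt the classical construction (due to Iosevich--Senger in \cite{IS20}, itself descended from the integer-lattice constructions used for the Erd\H os unit distance problem) of a point set containing many unit-dot-product pairs, and then to bootstrap single edges into whole trees by a layered product construction governed by the structure of $T$. First I would recall the planar (or $\mathbb R^d$) building block: there is a set $P$ of $N$ points such that the number of pairs $(p,q) \in P\times P$ with $p\cdot q = 1$ is $\gtrsim N^{1 + (d-1)/(d+1)}$. Concretely one takes $P$ to be a suitably scaled piece of the integer lattice, or the construction in \cite{IS20}, arranged so that for a positive proportion of points $p$, the affine hyperplane $\{x : p\cdot x = 1\}$ (the $1$-line of $p$, in the language of the Tools subsection) passes through $\gtrsim N^{(d-1)/(d+1)}$ points of $P$; summing over the $\gtrsim N$ such pins gives the stated count of incidences, hence of edges.

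Next I would pass from a single good edge to the tree $T$. Root $T$ at an arbitrary vertex and process it level by level. The idea is to build the point set $E$ as a union of ``copies'' of $P$, one associated to each vertex of $T$, with the placement chosen so that whenever $v$ is the parent of $w$ in $T$, the point playing the role of $v$ and the point playing the role of $w$ form a unit-dot-product pair inside a scaled, rotated copy of the basic block $P$. The cleanest way to organize this is to note that the basic block $P$ can be translated/rotated/scaled freely without changing the combinatorics of unit-dot-product pairs up to an overall affine adjustment --- more carefully, one uses the homogeneity of the dot product: if $p \cdot q = 1$ then $(\lambda p)\cdot(\lambda^{-1} q) = 1$. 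So one assigns to each vertex $v$ of $T$ a scaling parameter $\lambda_v$, puts $\lambda_v$-scaled copies of $P$ at the appropriate place, and checks that a copy of $T$ with all edge weights $1$ is obtained by choosing, independently for each vertex, one of the $\gtrsim N^{(d-1)/(d+1)}$ points on the relevant hyperplane, with the root contributing a factor of $N$ and each of the $k$ edges contributing a factor of $N^{(d-1)/(d+1)}$. With $N \approx n$ this yields $\gtrsim n^{1 + k(d-1)/(d+1)}$ copies of $T$ with edge weights all equal to $1$, which is exactly the claim; here the relevant $\vec w$ is the all-ones tuple, so no separate choice of $\vec w$ is needed (which is why the statement only asserts edge weights $1$).

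The main obstacle, and the place where care is genuinely required, is ensuring that the per-vertex choices really are independent --- that is, that choosing a point for $w$ from the hyperplane determined by the already-chosen point for its parent $v$ leaves the full $\gtrsim N^{(d-1)/(d+1)}$ freedom, and that distinct tuples of choices give distinct copies of $T$ (no collapsing through accidental coincidences between the different scaled blocks). This is where one must be slightly clever about the parameters $\lambda_v$: they should be chosen generically (e.g. along a curve, or as powers of a transcendental, or simply verified by a dimension count) so that the various translated/scaled copies of $P$ are in ``general position'' relative to one another and the only unit-dot-product relations realized among them are the intended parent--child ones. One also needs that each block, after scaling, still has $\gtrsim N$ points and still $\gtrsim N$ good pins, which follows because scaling is an invertible linear map and hence preserves the incidence count exactly. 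I expect the argument for independence of choices, and the bookkeeping that $|E| \approx n$ after taking the union over the $k+1$ vertices (a constant number of blocks, since $k$ is constant), to be the only parts needing a written-out verification; the rest is a direct unwinding of the construction in \cite{IS20}.
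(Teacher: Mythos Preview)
Your high-level strategy is the right one, and it matches the paper: take a point configuration with many unit-dot-product pairs, root the tree, and count embeddings level by level so that the root contributes a factor $\approx n$ and each of the $k$ remaining vertices contributes a factor $\approx n^{(d-1)/(d+1)}$. Where your proposal parts ways from the paper, and where it runs into trouble, is the device of placing one scaled/translated copy of the basic block $P$ per vertex of $T$ with \emph{generic} parameters $\lambda_v$.

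First, the affine freedom you invoke does not exist for dot products. Translation does not preserve the relation $p\cdot q=1$, and the scaling identity you quote, $(\lambda p)\cdot(\lambda^{-1}q)=1$, forces $\lambda_v\lambda_w=1$ for every edge $\{v,w\}$ of $T$. On a tree this pins all the $\lambda_v$ to just two values, alternating with the depth parity; you cannot then also choose them ``generically'' to put the blocks in general position. If instead you really take the $\lambda_v$ generic, the unit-dot-product pairs between the $v$-block and the $w$-block correspond to pairs in $P$ with dot product $1/(\lambda_v\lambda_w)$, and the construction from \cite{IS20} gives no lower bound on those for a generic value. Either way the per-vertex-block mechanism does not deliver the independent $n^{(d-1)/(d+1)}$ choices you need.

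The paper's fix is much simpler and sidesteps all of this: it builds a \emph{single} bipartite configuration $E\cup F$ with $|E|\approx|F|\approx n$ in which most $f\in F$ have $\approx q^{d-1}\approx n^{(d-1)/(d+1)}$ unit-dot-product partners in $E$ and, crucially, the dual statement holds as well. One then $2$-colors $T$ and sends one color class to $F$ and the other to $E$; processing the tree from a root gives $\approx n$ choices for the root and $\approx n^{(d-1)/(d+1)}$ for each subsequent vertex. The worry about ``collapsing through accidental coincidences'' that motivated your per-vertex blocks is not an issue: at each step only $O(k)$ of the available $\approx n^{(d-1)/(d+1)}$ choices are forbidden by previously used points, a loss absorbed in the implicit constants. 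So you can drop the $\lambda_v$ machinery entirely; the single bipartite block together with the $2$-coloring of $T$ already does the job.
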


Combining these two yields Theorem \ref{main2}. We now prove each of the propositions.

\subsubsection{Proof of Proposition \ref{KMSprop}}

This construction is a modification of a chains construction in \cite{KMS}. It can be easily modified any number of ways, but we restrict to one way that is relatively easy to communicate, while still showing the essential components that one might want to modify. We begin by 2-coloring the vertices tree $T$, which we can do because the chromatic number of any tree is at most 2. Call the two bipartition sets $U$ and $V$, and suppose without loss of generality that $|U|\geq |V|$. Now let $k_1$ denote the size of $U$ and let $k_2$ denote the size of $|V|.$ So $k_1+k_2=k+1.$ Recall that by definition, we know $k_1=|U|\geq \lceil k/2 \rceil.$ Now we proceed with the following algorithm. Pick any vertex in $U$ and call it $u_1$.

We will now put points on the standard $xy$-coordinate plane. Place $\lfloor(n-k_2)/k_1\rfloor$ points along the line $x=1$ in the plane. Now suppose that $u_1$ has $a_1$ neighbors in $T$ called $v_{1,1}$ through $v_{1,a_1}$. For each such neighbor $v_{1,j}$, place a point at $(1+j,0)$. So any of the points along the line $x=1$ have dot product $1+j$ with the point at $(1+j,0).$ Now, call the $b$ neighbors of $v_{1,1}$ (other than $u_1$) $u_2, u_3, \dots, u_{b}$. For each such neighbor $u_j$, place $\lfloor(n-k_2)k_1\rfloor$ points on the line $x=1+a_1+j$. Continue by placing single points along the $x$-axis for vertices from $V$, and columns of $\lfloor(n-k_2)/k_1\rfloor$ points for each vertex from $U$, keeping track of the relative dot products. After all of the vertices in $U$ and $V$ have been processed, place any other remaining points in such a way that they do not overlap with previously placed points, until you have a total of $n$ points, and the construction is finished.

\begin{minipage}{4.25in}
\hskip6em\includegraphics[scale=.33]{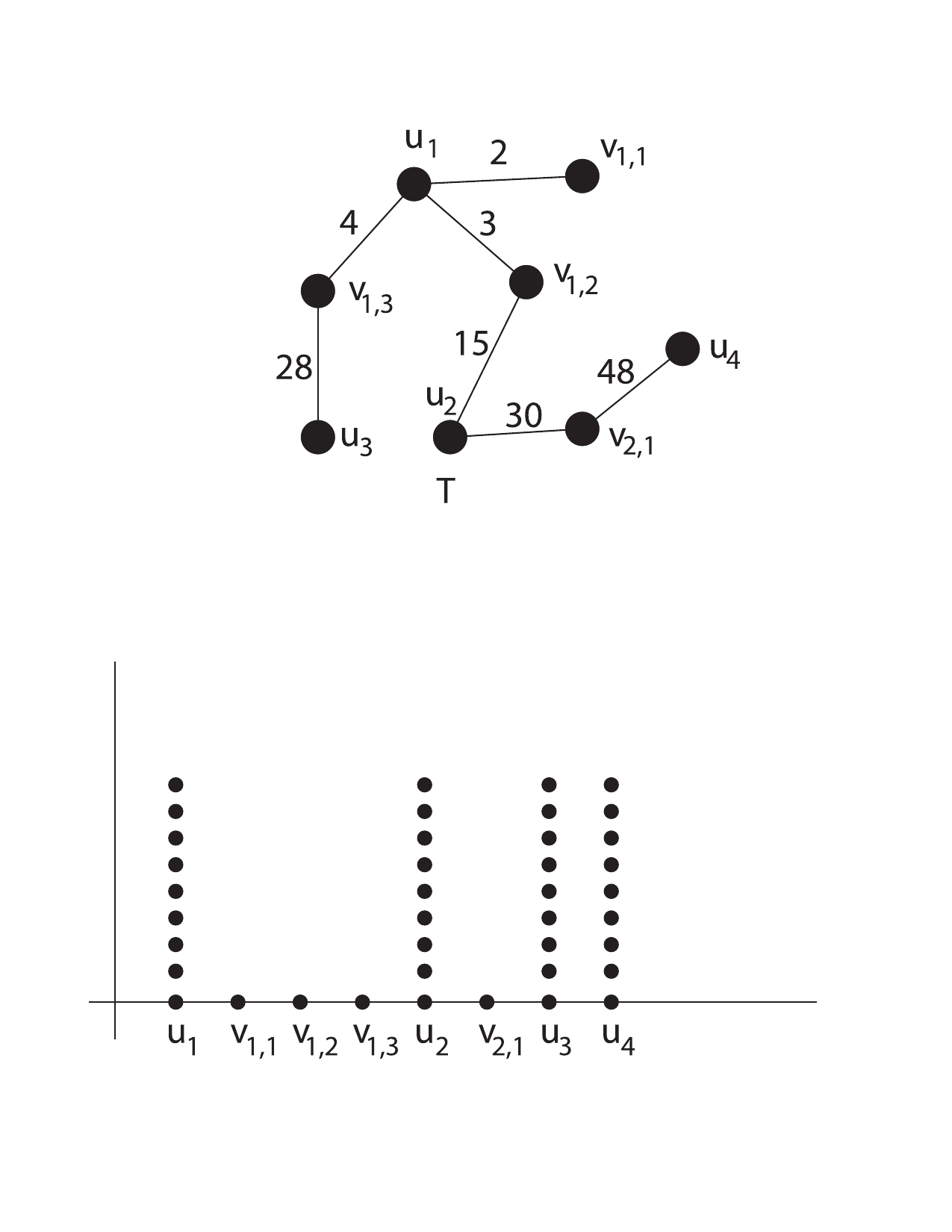}\label{dpTreesFig2}\\
{\bf Figure 2.5.1:} Here we can see the tree $T$ with vertices broken up into $U$ and $V$, along with a set of points in $\mathbb R^2$ that correspond to it. Note for example, the point at $(6,0)$ will always a choice for $v_{2,1}$, and its dot product with any point in the column of points with $x=8$, each of which is a possible choice for $u_4$, will have dot product $48$, so we have the dot product edge weight of $48$ on the edge connecting $v_{2,1}$ and $u_4$.\\\\
\end{minipage}

\begin{remark}
In the proof, we partitioned the vertices into two sets, $U$ and $V$. The proof actually shows that there are $n^|U|$ copies of the tree $T$, and if $|U|$ is much bigger than $\lceil k/2\rceil,$ the construction yields significantly more copies of the tree. Moreover, while the construction here uses lines, if one is working in higher dimensions, the points may be spread more evenly across higher dimensional hyperplanes. While this yields the same number of trees, point sets that are more evenly distributed could be desirable for some applications.
\end{remark}

\subsubsection{Proof of Proposition \ref{ISprop}}

This construction is essentially the discrete part of the construction in \cite{IS20}, though with some slight modifications as the constants chosen there may lead to some confusion. The new piece here is how to embed the trees. Also, in Proposition \ref{ISprop}, we stress that the dot products are always 1, as opposed to the previous construction, for Proposition \ref{KMSprop}, where our choices of dot product edge weights might have to be different. The basic idea is to construct two sets, $E$ and $F$, where every point in either set makes the dot product $1$ with many points in the other set. The set $E$ will be a kind of lattice, so it will have many points on many hyperplanes. The set $F$ will be the points that make dot product $1$ with many of the hyperplanes. We now sketch the modified construction.

For a large natural number $n$, fix $q\approx dn^\frac{1}{d+1}.$ Now define two sets of numbers:
$$A:=\left\{\frac{q+i}{dq}:i=1,\dots, q\right\},$$
and
$$B:=\left\{\frac{q^2+j}{d^2q^2}:j=1,\dots,q^2\right\}.$$
To build our lattice, $E$, we will take the $(d-1)$-fold Cartesian product of $A$ with itself, and then take the Cartesian product of this set with $B$. By construction, we can see that $|E|=q^{d-1}\cdot q^2\approx n.$ Next, we focus on the point set $F$, which will also be built using the sets of numbers $A$ and $B$.
$$F:=\left\{\left(\frac{-c_1}{b}, \frac{-c_2}{b}, \dots, \frac{-c_{d-1}}{b}, \frac{1}{b}\right):c_{j}\in A, b\in B \right\}.$$
Now we want to get a handle on point pairs whose dot product is $1$. Dot products are constant on hyperplanes, so given a $d$-tuple, $(c_1, c_2, \dots, c_{d-1},b),$ we can define the hyperplane
$$h(c_1, c_2, \dots, c_{d-1},b) := \left\{ (x_1, x_2, \dots, x_d)\in \mathbb R^d : x_d = \left(\sum_{j=1}^{d-1}c_jx_j\right) + b \right\}.$$
We will now focus on the following set of hyperplanes
$$H :=\left\{h(c_1, c_2, \dots, c_{d-1},b):c_j\in A, b\in B \right\}.$$
These hyperplanes are the sets of points that determine dot product $1$ with points in $F.$ To prove this, consider a $d$-tuple, $(c_1, c_2, \dots, c_{d-1},b)\in A^{d-1}\times B,$ and compute the dot product of the associated element of $F$ and any point on the hyperplane associated to the same $d$-tuple. To be precise, consider the point $f\in F$ given by
$$f= \left(\frac{-c_1}{b}, \frac{-c_2}{b}, \dots, \frac{-c_{d-1}}{b}, \frac{1}{b}\right) \text{ to be associated to } h_f:=h(c_1, \dots, c_{d-1},b),$$
a hyperplane in $H$, and compute its dot product with an arbitrary $x\in h_f.$ Their dot product will be
\begin{align*}
f\cdot x &= \left(\frac{-c_1}{b}, \frac{-c_2}{b}, \dots, \frac{-c_{d-1}}{b}, \frac{1}{b}\right) \cdot \left(x_1, x_2, \dots, x_{d-1}, \left(\sum_{j=1}^{d-1}c_jx_j \right)+b \right)\\
&=\left(\sum_{j=1}^{d-1}\frac{-c_jx_j}{b}\right) + \left(\sum_{j=1}^{d-1}\frac{c_jx_j}{b}\right) +\frac{b}{b}=1.
\end{align*}
We now consider any $f\in F$. The associated hyperplane, $h_f\in H,$ consists of points that have dot product $1$ with $f.$ By construction, for most\footnote{Here we notice that there could be some members of $f$ whose hyperplanes cut through the edge of $E$, and therefore hit significantly fewer than $\approx q^{d-1}$ points, but this will happen for $\lesssim |F|$ points. As this is a negligible loss, we ignore it and similar instances of it.} $f\in F,$ the associated hyperplane $h_f$ will have $\approx q^{d-1}$ points from $E$. Moreover, the dual relation also holds. That is, for most $e\in E,$ there are about $q^{d-1}$ points $f\in F$ such that $e\cdot f = 1.$

Now we embed our trees as in the proof of the previous result. For any tree $T$, we 2-color the vertices to obtain disjoint vertex sets $U$ and $V.$ Fix some vertex from $U$ to be $u_1.$ Let any point $f_1\in F$ be a choice for $u_1.$ We consider the set of $q^{d-1}$ choices of $e\in h_{f_1}$ that have dot product $1$ with $f_1.$ Suppose that $u_1$ has $a_1$ neighbors in $T$, and designate these by $v_{1,1}, v_{1,2}, \dots, v_{1,a_1}.$ For each of these, we select a different point in $h_{f_1}$. Notice that there are $q^{d-1}$ choices for each of them. Now consider the neighbors of $v_{1,1}$ in $T$, and for each representative point $e_{1,1}\in h_{f_1}$ of each of them, choose from the $q^{d-1}$ points of $F$ that make dot product $1$ with $e_{1,1}.$ Continuing in this way, each new vertex that is processed in the tree has $\approx q^{d-1}$ possible choices. Since there were $\approx n$ choices for $u_1,$ and each of the $k$ subsequent points had $q^{d-1}\approx n^\frac{d-1}{d+1}$ choices, we end up with a total of $\gtrsim n^{1+k(d-1)/(d+1)}$ isomorphic copies of $T$ in the set whose edge weights are $1$, as claimed.

\begin{remark}
We pause to note that the higher dimensional results may appear unnecessary in light of a slight modification of the construction in Remark 2 of \cite{KMS}, which can give $n^k$ trees for certain $k$-tuples of edge weights in dimensions $d\geq 3.$ However, the constructions given here are not concentrated on lines, and therefore are $s$-adaptable for a wider range of $s.$ See \cite{IS20, KMS} for more on $s$-adaptability, and how it connects to constructing fractal sets. For completeness, we give the construction for $k$-paths, and describe how to modify it for general trees.

Given $n$ and $k$, arrange $\left\lfloor n/(k+1)\right\rfloor$ points on each of the following lines perpendicular to the $x$-axis: $\{(1,y,0)\}, \{2,0,z\}, \{(3,y,0)\}, \{(4,0,z)\}\dots$ alternating whether $y$ or $z$ is the free variable on every other line. We then 2-color the path, and map the vertices of the (potentially) larger color class to the points on the lines with odd $x$ coordinates and map the vertices of the other color to the points on the lines with even $x$ coordinate. To modify this for general trees, we begin by 2-coloring the tree, then make sure we have a $y$-line for every vertex of one color and a $z$-line for every vertex of the other color. So the tree given in Figure \ref{dpTreesFig2} could be given by $\left\lfloor n/8 \right\rfloor$ points on each of the following lines:
\[\{(1,y,0)\}, \{2,0,z\}, \{(3,0,z)\}, \{(4,0,z)\},\{(5,y,0)\}, \{6,0,z\}, \{(7,y,0)\}, \{(8,y,0)\}.\]
\end{remark}

\section{Acknowlegdments}
We would like to thank Alex Iosevich, Ethan Lynch, and Caleb Marshall for valuable discussions that improved the results here and the exposition thereof.


\begin{thebibliography}{99}




\bibitem{BS} D. Barker and S. Senger, {\it Upper bounds on pairs of dot products}, Journal of Combinatorial Mathematics and Combinatorial Computing, Volume 103, November, 2017, pp. 211--224.

\bibitem{BIT} M. Bennett, A. Iosevich, and K. Taylor, {\it Finite chains inside thin subsets of ${\Bbb R}^d$,} Analysis and PDE, volume 9, no. 3, (2016). 

\bibitem{BCLS} V. Blevins, D. Crosby, E. Lynch, and S. Senger, {\it On the Number of Dot Product Chains in Finite Fields and Rings}, Nathanson M. (eds) Combinatorial and Additive Number Theory V. CANT 2021. pp. 1--20. Springer Proceedings in Mathematics \& Statistics. Springer, Cham.

\bibitem{BKT} J. Bourgain, N. Katz, and T. Tao, {\it A sum-product estimate in finite fields, and applications,} Geom. Funct. Anal. {\bf 14} (2004), pp. 27--57.

\bibitem{BMP} P. Brass, W. Moser, and J. Pach, {\it Research Problems in Discrete Geometry,} Springer (2000), 499 pp.

\bibitem{CEHIK} J. Chapman, B. Erdo\u{g}an,  D. Hart, A. Iosevich, D. Koh, {\it Pinned distance sets, $k$-simplices, Wolff's exponent in finite fields and sum-product estimates}, Math. Z. 271 (2012), no. 1-2, 63--93.

\bibitem{CS} D. Covert and S. Senger, {\it Pairs of dot products in finite fields and rings}, Nathanson M. (eds) Combinatorial and Additive Number Theory II. CANT 2015, CANT 2016. Springer Proceedings in Mathematics \& Statistics, vol 220. Springer, Cham. (Appeared 14 Jan. 2018)

\bibitem{CIP} D. Covert, A. Iosevich, J. Pakianathan, \emph{Geometric configurations in the ring of integers modulo $p^{\ell}$}, Indiana University Mathematics Journal, 61 (2012), 1949--1969.

\bibitem{CHISU} D. Covert, D. Hart, A. Iosevich, S. Senger, and I. Uriarte-Tuero, {\it An analog of the Furstenberg-Katznelson-Weiss theorem on triangles in sets of positive density in finite field geometries,}  Discrete Math. 311, no. 6, 423--430, (2011). 


\bibitem{Erd46} P.~Erd\H{o}s, {\it On sets of distances of $n$ points}, Amer. Math. Monthly {\bf 53} (1946) 248--250.





\bibitem{FK} N. Frankl and A. Kupavskii, {\it Almost sharp bounds on the number of discrete chains in the plane}, Combinatorica 42 (Suppl 1), 1119--1143 (2022).

\bibitem{GIS} J. Garibaldi, A. Iosevich, and S. Senger, {\it Erd\H os distance problem,} AMS Student Library Series, 56, (2011).

\bibitem{GK} L. Guth and N. H. Katz, {\it On the Erd\H os distinct distance problem in the plane,} Annals of Math., Pages 155--190, Volume 181 (2015), Issue 1.

\bibitem{GPRS}  S. Gunter, E. Palsson, B. Rhodes, and S. Senger, {\it Bounds on point configurations determined by distances and dot products,} Nathanson M. (eds) Combinatorial and Additive Number Theory IV. CANT 2019, CANT 2020. Springer Proceedings in Mathematics \& Statistics, vol 347. Springer, Cham.

\bibitem{HRS} B. Hanson, O. Roche-Newton, and S. Senger, {\it Convexity, superquadratic growth, and dot products}, Journal of the London Mathematical Society, Volume 107, Issue 5, May 2023, pp. 1900--1923.

\bibitem{HIKR} D. Hart, A. Iosevich D. Koh, M. Rudnev, {\it Averages over hyperplanes, sum-product theory in vector spaces over finite fields and the Erd\H os-Falconer distance conjecture}, Trans. Amer. Math. Soc. 363 (2011), no. 6, 3255--3275.


\bibitem{IR07} A.~Iosevich and M.~Rudnev, {\it Erd\H os-Falconer distance problem in vector spaces over finite fields}, Trans. Amer. Math. Soc., 359 (2007), no.~12, 6127--6142.



\bibitem{IS} A. Iosevich and S. Senger, {\it Orthogonal systems in vector spaces over finite fields,} Electronic J. of Combinatorics, Volume 15, December (2008).

\bibitem{IS20} A. Iosevich and S. Senger, {\it Falconer-type estimates for dot products}, Bulletin of the Hellenic Mathematical Society {\bf 64} 98--110 (2020).




\bibitem{KMS} S. Kilmer, C. Marshall, and S. Senger, {\it Dot product chains,} (accepted) arXiv:2006.11467 (2020).



\bibitem{OT} Y. Ou and K. Taylor, {\it Finite point configurations and the regular value theorem in a fractal
setting}, Indiana Univ. Math. J. 71 (2022), no. 4, 1707--1761.

\bibitem{PSS} E. Palsson, S. Senger, and A. Sheffer, {\it On the number of discrete chains}, (to appear in the Proceedings of the American Mathematical Society).

\bibitem{PST} T. Pham, S. Senger, and D. Tran, {\it Distribution of pinned distance trees in the plane $\mathbb F_p^2$}, Discrete Mathematics, Volume 346, Issue 12, December 2023, 113613.

\bibitem{SST} J. Spencer, E. Szemer\'edi, and W. T. Trotter, {\it Unit distances in the Euclidean plane} B. Bollob\'as, editor, ``Graph Theory and Combinatorics", pages 293--303, Academic Press, New York, NY, (1984).

\bibitem{Steele} J. M. Steele, {\it The Cauchy-Schwarz Master Class ICM Edition: An Introduction to the Art of Mathematical Inequalities}, Cambridge University Press, (2010).


\bibitem{Szek} L.A.~Sz\'{e}kely, {\it Crossing numbers and hard Erd\H{o}s problems in discrete geometry}, Combin. Probab. Comput. {\bf 6} (1997), no.~3, 353--358.

\bibitem{ST83}
E.\ Szemer\'edi and W.\ T.\ Trotter, {\it Extremal problems in discrete geometry}, \emph{Combinatorica} {\bf 3} (1983), 381--392.


\bibitem{Vinh} T. V. Pham and L. A. Vinh {\it Orthogonal Systems in Vector Spaces over Finite Rings}, Electronic J. of Combinatorics, Volume 19, Issue 2 (2012).


\end{thebibliography}
\end{document}